\documentclass[11pt]{amsart}
\newtheorem{theoreme}{Theorem}
\newtheorem{fait}[theoreme]{Fact}
\newtheorem{lemme}[theoreme]{Lemma}
\newtheorem{proposition}[theoreme]{Proposition}

\newtheorem{corollaire}[theoreme]{Corollary}

\newtheorem{conjecture}[theoreme]{Conjecture}

\newtheorem{thmABC}{Theorem}
\newtheorem{factABC}{Fact}
\newtheorem{lemABC}{Lemma}

\theoremstyle{definition}
\newtheorem{remarque}[theoreme]{Remark}

    \usepackage{mathrsfs}

 \setcounter{secnumdepth}{6}
\usepackage{xy}
\xyoption{all}
\usepackage[normalem]{ulem}
\usepackage{enumerate}
\usepackage[a4paper, left=1.5cm,right=1.5cm,bottom=2cm,top=2cm]{geometry}
\usepackage{xcolor}
\usepackage{graphicx,epsfig}
\usepackage{amscd}
\usepackage{amssymb}
\usepackage[shortlabels]{enumitem}
\usepackage{ulem}
\usepackage{hyperref}
\usepackage[utf8]{inputenc}
\usepackage[T1]{fontenc}
\usepackage{lmodern}
\usepackage{tensor}
\usepackage{colonequals}
\usepackage{amsxtra}

\hypersetup{colorlinks,allcolors=blue}

      \newcommand{\crys}{{\hbox{\rm \tiny crys}}}
   
   \newcommand{\cpt}{{\hbox{\rm \tiny cpt}}}

       \newcommand{\ngen}{{\hbox{\rm \tiny ngen}}}
   \newcommand{\gen}{{\hbox{\rm \tiny gen}}}
   \newcommand{\topo}{{\hbox{\rm \tiny top}}}
   \newcommand{\et}{{\hbox{\rm \tiny et}}}
   \newcommand{\an}{{\hbox{\rm \tiny an}}}
   
     \newcommand{\tors}{\hbox{\rm \tiny tors}}
            \newcommand{\free}{\hbox{\rm \tiny free}}

  \newcommand{\Sim}{\hbox{\rm im}}

 \newcommand{\SH}{\hbox{\rm H}}
  
  \newcommand{\SGL}{\hbox{\rm GL}}

                              \newcommand{\CH}{\hbox{\rm CH}}

     \newcommand{\C}{\mathbb{C}}
    \newcommand{\F}{\mathbb{F}}
      
         \newcommand{\Q}{\mathbb{Q}}
           
     \newcommand{\Z}{\mathbb{Z}}

\newcommand{\cS}{\mathcal{S}}

\newcommand{\cZ}{\mathcal{Z}}

\newcommand{\cV}{\mathcal{V}}
\newcommand{\cX}{\mathcal{X}}

\newcommand{\Spec}{\mathrm{Spec}}

\newcommand{\Ob}{\mathrm{Ob}}

\title{Uniform bounds for obstructions to the integral  Tate conjecture}

\author[Anna Cadoret]{Anna Cadoret}
\address{\hspace{-0.5cm}Anna Cadoret\newline
IMJ-PRG -- Sorbonne Universit\'e, Paris, FRANCE\newline
\textit{anna.cadoret@imj-prg.fr}}

\author[Alena Pirutka]{Alena Pirutka}
\address{\hspace{-0.5cm}Alena Pirutka \newline
CIMS,
New York University, 
New York, U.S.A.\newline 
\textit{pirutka@cims.nyu.edu}}

\date{\today}
\begin{document}
\maketitle

\begin{abstract}
Assuming natural variational realization conjectures, we give uniform bounds for the obstruction to the integral  Tate conjecture   in $1$-dimensional families of algebraic varieties over an infinite finitely generated field.
 
  \begin{center} 2020 \textit{Mathematics Subject Classification.} Primary: 14C25; Secondary:  14D07, 14D10, 14F30, 14G25.\end{center}
 \end{abstract} 

\section{Introduction}\textit{}

\noindent  For an abelian group $A$,  write $A_{\tors}\subset A$ for its torsion subgroup and $A\twoheadrightarrow A^{\free}:=A/A_{\tors}$ for its maximal torsion-free quotient. For an algebraic group $G$, let $G^\circ\subset G$ denote its neutral component and $G\twoheadrightarrow \pi_0(G):=G/G^\circ$ its group of connected components.\\

\noindent A variety over a field $k$ is a separated scheme  of finite type over $k$.\\

\noindent  In this paper $k$ will denote an infinite field of characteristic $p\geq 0$, finitely generated over its prime subfield. We fix  a separable closure $k\hookrightarrow \bar k$ and write $\pi_1(k)=$Gal$(\bar k|k)$ for the absolute Galois group. \\

\subsection{Tate conjectures}Let $X$  be a smooth projective variety  over $k$. For every  integer  $i\geq 0$, let $\CH^i(X)$ denote the group of algebraic cycles of codimension $i$  on $X$ modulo rational equivalence,  and for every ring $R$, set $\CH^i(X)_R:=\CH^i(X)\otimes_{\Z}R$. For a prime $\ell\not= p$, set  $$V_{\Z_\ell}:=\SH^{2i}(X_{\bar k},\Z_\ell(i)).$$ Let $G_\ell\subset \SGL(V_{\Q_\ell})$ denote  the Zariski-closure of the image of   $\pi_1(k)$   acting on $V_{\Q_\ell}:=V_{\Z_\ell}\otimes_{\Z_\ell} \Q_\ell $ and let 
$$\widetilde{V}_{\Q_\ell}:=(V_{\Q_\ell})^{G_\ell^\circ}\subset V_{\Q_\ell}$$ 
denote  the  $\Q_\ell$-vector space of Tate classes.   The cycle class map   $c_\ell: \CH^i(X_{\bar k})\rightarrow   V_{\Z_\ell} $ 
for $\Z_\ell$-\'etale cohomology fits into the following canonical Cartesian diagram
 \begin{equation}\label{Diagram:Basic}\xymatrix{\CH^i(X_{\bar k}) \ar[r]\ar[dd]\ar@/^2pc/[rrrr]^{c_\ell}&\CH^i(X_{\bar k}) _{\Z_\ell}\ar@{->>}[r]\ar[dd]&V_{\Z_\ell}^a\ar@{^{(}->}[r]\ar@{->>}[d]&\widetilde{V}_{\Z_\ell}\ar@{^{(}->}[r]\ar@{->>}[d]\ar@{}[dr]|\square&V_{\Z_\ell}\ar@{->>}[d]\\
&&V_{\Z_\ell}^{\free,a}\ar@{^{(}->}[r]\ar@{^{(}->}[d]&\widetilde{V}^{\free}_{\Z_\ell}\ar@{^{(}->}[r]\ar@{^{(}->}[d]\ar@{}[dr]|\square&V_{\Z_\ell}^{\free}\ar@{^{(}->}[d]\\
\CH^i(X_{\bar k})_\Q\ar[r]&\CH^i(X_{\bar k})_{\Q_\ell}\ar@{->>}[r]&V_{\Q_\ell}^a\ar@{^{(}->}[r]&\widetilde{V}_{\Q_\ell}\ar@{^{(}->}[r]&V_{\Q_\ell},}\end{equation}
where $V_{\Z_\ell}^a$ (resp. $V_{\Q_\ell}^a$) is the image of the cycle class map $c_\ell\otimes \Z_\ell: \CH^i(X_{\bar k})_{\Z_\ell}\rightarrow   V_{\Z_\ell} $ (resp. $c_\ell\otimes \Q_\ell$) and where $\widetilde{V}_{\Z_\ell}$ and $\widetilde{V}^{\free}_{\Z_\ell}$ are defined by the rightmost Cartesian squares of the diagram.\\

\noindent The (classical) rational $\Q_\ell$-Tate conjecture
for  codimension $i$ cycles on   $X$ \cite{TateWH} 
$$ \hbox{\rm Tate}_{\Q_\ell}(X,i) \;\; 
   V_{\Q_\ell}^a =\widetilde{V}_{\Q_\ell}$$
admits the following integral variants:
 $$\begin{tabular}[t]{lll}
$ \hbox{\rm Tate}^{\free}_{\Z_\ell}(X,i)$&
   $V_{\Z_\ell}^{\free,a} =\widetilde{V}^{\free}_{\Z_\ell}$& (Integral Tate conjecture modulo torsion);\\
 $\hbox{\rm Tate}_{\Z_\ell}(X,i)$& 
$V_{\Z_\ell}^a =\widetilde{V}_{\Z_\ell}$&  (Integral Tate conjecture).
\end{tabular}$$
\noindent While, tautologically, 
 $$\hbox{\rm Tate}_{\Z_\ell}(X,i)\Rightarrow  \hbox{\rm Tate}^{\free}_{\Z_\ell}(X,i)\Rightarrow  \hbox{\rm Tate}_{\Q_\ell}(X,i),$$
it is known that, in general, the converse implications fail (see e.g. \cite{CTS, AH} for an example of the failure of $\hbox{\rm Tate}_{\Z_\ell}(X,i)$ and \cite{CTS, Kollar, Totaro} for examples of the failure of $\hbox{\rm Tate}^{\free}_{\Z_\ell}(X,i)$).\\

\noindent The aim of this note is to analyze the obstructions to $\hbox{\rm Tate}_{\Z_\ell}(X,i)$, $\hbox{\rm Tate}^{\free}_{\Z_\ell}(X,i)$ when $X$ varies in  family. 
Our arguments provide a new application of the structure theorem of the degeneration locus of $\ell$-adic local systems of \cite{UOI2} (see Fact \ref{GenericPts:Spar1}),  in the spirit of \cite{CC, Bas}.\\

\noindent Before considering the variational setting, we make some elementary remarks.  By definition, the obstructions to  $ \hbox{\rm Tate}_{\Q_\ell}(X,i)$,  $ \hbox{\rm Tate}^{\free}_{\Z_\ell}(X,i)$, $ \hbox{\rm Tate}_{\Z_\ell}(X,i)$ are, respectively:

$$\widetilde{C}_{\Q_\ell} := \widetilde{V}_{\Q_\ell}/V_{\Q_\ell}^a,\;\;  \widetilde{C}^{\free}_{\Z_\ell} := \widetilde{V}^{\free}_{\Z_\ell}/V_{\Z_\ell}^{\free,a},\;\;  \widetilde{C}_{\Z_\ell}:= \widetilde{V}_{\Z_\ell}/V_{\Z_\ell}^a. $$

\subsubsection{$ \widetilde{C}^{\free}_{\Z_\ell}$ \textit{versus}  $\widetilde{C}_{\Z_\ell}$} The short exact sequence 
\begin{equation}\label{Eq:Obstruction1}0\rightarrow (V_{\Z_\ell})_{\tors}/(V_{\Z_\ell}^a)_{\tors}  \rightarrow\widetilde{C}_{\Z_\ell} \rightarrow  \widetilde{C}^{\free}_{\Z_\ell}\rightarrow 0\end{equation}
realizes  $\widetilde{C}_{\Z_\ell}$  an extension of $ \widetilde{C}^{\free}_{\Z_\ell}$ by a finite group which is a quotient of $(V_{\Z_\ell})_{\tors}$. As $(V_{\Z_\ell})_{\tors}$ is constant in  family, the problems of bounding uniformly   $\widetilde{C}^{\free}_{\Z_\ell}$ and  $\widetilde{C}_{\Z_\ell}$ are essentially equivalent.

\subsubsection{$ \widetilde{C}_{\Q_\ell}$ \textit{versus}  $\widetilde{C}^{\free}_{\Z_\ell}$} From $ \widetilde{C}_{\Q_\ell}= \widetilde{C}^{\free}_{\Z_\ell}\otimes_{\Z_\ell}\Q_\ell$ and the short exact sequence (\ref{Eq:Obstruction1}), one has the folllowing equivalences $$\hbox{\rm Tate}_{\Q_\ell}(X,i) \Leftrightarrow (\widetilde{C}^{\free}_{\Z_\ell})_{\tors}= \widetilde{C}^{\free}_{\Z_\ell} \Leftrightarrow  (\widetilde{C}_{\Z_\ell})_{\tors}= \widetilde{C}_{\Z_\ell}$$
and, in case they hold, (\ref{Eq:Obstruction1}) reads 
\begin{equation}\label{Eq:Obstruction2}0\rightarrow (V_{\Z_\ell})_{\tors}/(V_{\Z_\ell}^a)_{\tors}  \rightarrow(\widetilde{C}_{\Z_\ell})_{\tors} \rightarrow  (\widetilde{C}^{\free}_{\Z_\ell})_{\tors}\rightarrow 0.\end{equation}
So that, assuming $ \hbox{\rm Tate}_{\Q_\ell}(X,i)$, the obstructions we are interested in are $(\widetilde{C}_{\Z_\ell})_{\tors}$, $ (\widetilde{C}^{\free}_{\Z_\ell})_{\tors}$. The obstruction  $ (\widetilde{C}^{\free}_{\Z_\ell})_{\tors}$ can be described without involving the $\Z_\ell$-module $\widetilde{V}^{\free}_{\Z_\ell}$ of Tate classes. Indeed, writing $$C^{\free}_{\Z_\ell}:=V^{\free}_{\Z_\ell}/V^{\free,a}_{\Z_\ell},$$
 it follows from the short exact sequence 
 $$0\rightarrow C^{\free}_{\Z_\ell}\rightarrow \widetilde{C}^{\free}_{\Z_\ell}\rightarrow V^{\free}_{\Z_\ell}/\widetilde{V}^{\free}_{\Z_\ell}\rightarrow 0$$ 
 and the fact that $ V^{\free}_{\Z_\ell}/\widetilde{V}^{\free}_{\Z_\ell}$ is torsion-free that 
 $$(C^{\free}_{\Z_\ell})_{\tors}=(\widetilde{C}^{\free}_{\Z_\ell})_{\tors}.$$

 \subsection{}\label{Sec:Statements}\hspace{-0.3cm}Let now $S$ be a smooth, geometrically connected variety over $k$, with generic point $\eta$, and $f:X\rightarrow S$ a smooth projective morphism. For $s\in S$, denote by a subscript $(-)_s$ the various modules attached to $X_s$ introduced above (\textit{e.g.} $V_{\Z_\ell,s}:=\SH^{2i}(X_{\bar s},\Z_\ell(i))$, $V^a_{\Z_\ell,s}:=\hbox{\rm im}[\CH^i(X_{\bar s})_{\Z_\ell}\rightarrow V_{\Z_\ell,s}]$ \textit{etc.}).  One would like to investigate how 
 $$\widetilde{\Ob}_{\Z_\ell,s}:=|(\widetilde{C}_{\Z_\ell,s})_{\tors}|$$
 vary with $s\in |S|$. 
 \noindent In particular, the vanishing of the obstruction group $(\widetilde{C}_{\Z_\ell,s})_{\tors}$ reads as  $\widetilde{\Ob}_{\Z_\ell,s}=1$.

 \subsubsection{}\label{Sec:MainThm1}\hspace{-0.3cm}Assume first $p=0$. The following statement is predicted by the main conjecture of \cite{Bas}. For every integer $d\geq 1$, let  $|S|^{\leq d}\subset |S|$ denote the set of all closed points $ s\in |S|$ with residue degree $ [k(s):k]\leq d  $.

 \begin{conjecture}\label{MC} For every integer $d\geq 1$, one has 
 $$\widetilde{\Ob}_{\Z_\ell}^{\leq d}:=\hbox{\rm sup}\lbrace \widetilde{\Ob}_{\Z_\ell,s}\;|\; s\in |S|^{\leq d}\rbrace <+\infty $$
 and $\widetilde{\Ob}_{\Z_\ell}^{\leq d}=1$, $\ell\gg 0$.
 \end{conjecture}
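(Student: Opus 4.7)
The strategy is to propagate a bound at the generic point $\eta$ of $S$ to bounds at closed points of bounded residue degree, paying special attention to the locus where the $\ell$-adic monodromy drops.

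By the short exact sequence (\ref{Eq:Obstruction2}) and the equality $(\widetilde{C}^{\free}_{\Z_\ell,s})_{\tors} = (C^{\free}_{\Z_\ell,s})_{\tors}$ recalled just above the conjecture, the problem splits as uniform bounds on $|(V_{\Z_\ell,s})_{\tors}|$ and on $|(C^{\free}_{\Z_\ell,s})_{\tors}|$. The first is uniformly bounded by proper smooth base change applied to $R^{2i}f_*\Z_\ell(i)$ and vanishes for $\ell \gg 0$ by constructibility, so the core task is a uniform bound on $|(C^{\free}_{\Z_\ell,s})_{\tors}|$. Under the variational realization conjectures, $\hbox{\rm Tate}_{\Q_\ell}(X_\eta,i)$ holds, and the generic obstruction $N_\eta := |(C^{\free}_{\Z_\ell,\eta})_{\tors}|$ is finite; for $\ell \gg 0$, standard torsion-freeness arguments over a field finitely generated of characteristic $0$ force $N_\eta = 1$.

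Next, I would spread out a finite set of cycles on $X_\eta$ generating $V^{\free,a}_{\Z_\ell,\eta}$ modulo torsion to relative cycles over a non-empty open $U \subset S$, so that they specialize to $X_{\bar s}$ for all $s \in |U|$. By the structure theorem of \cite{UOI2}, the degeneration locus $S_{\deg} \subset S$ where $G_{\ell,s}^\circ$ strictly drops is sparse, with a controlled stratification according to the conjugacy class of $G_{\ell,s}^\circ$ inside $G_{\ell,\eta}$; off $S_{\deg}$, specialization identifies the Tate classes at $s$ with those at $\eta$, so the obstruction at such $s$ is bounded by $N_\eta$. For a $1$-dimensional family, $S_{\deg}$ is a finite set of closed points, and only finitely many of them have residue degree $\leq d$; at each, invoking the variational realization conjecture produces algebraic cycles matching the new Tate classes, so the obstruction there is also finite. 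Since finitely many strata occur and each contributes a finite quantity, assembling the bounds yields $\widetilde{\Ob}^{\leq d}_{\Z_\ell} < +\infty$. For $\ell \gg 0$ the stratification collapses to the generic stratum and one concludes $\widetilde{\Ob}^{\leq d}_{\Z_\ell} = 1$ from the vanishing of $N_\eta$ and of the torsion in $V_{\Z_\ell,s}$.

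The main obstacle is the production of algebraic cycles on fibers over the degeneration locus: the Tate classes appearing there are genuinely new, they are not obtained by specialization from $\eta$, and a direct geometric construction fails. This is exactly where the variational realization conjecture must be invoked; the role of the structure theorem of \cite{UOI2} is to make this conjectural input usable by carving $S_{\deg}$ into a finite stratification on which the conjecture can be applied piecewise. In higher-dimensional $S$, one would have to iterate this stratification inductively, which is the reason the paper restricts to $\dim S = 1$, where $S_{\deg}$ is already $0$-dimensional and the recursion terminates immediately.
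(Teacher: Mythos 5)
You were asked about a statement that the paper itself records as an open \emph{conjecture}: the paper only establishes it under the extra hypotheses that $S$ is a curve and that the stabilized variational conjecture VSing$(f_\infty,i)$ holds (Theorem \ref{MainTh1}). Your plan implicitly assumes exactly these hypotheses, so I compare it with the proof of Theorem \ref{MainTh1}. The outer skeleton matches the paper (splitting off $(V_{\Z_\ell,s})_{\tors}$, sparsity of the non-generic locus via \cite{UOI2}, spreading out cycles from the generic fibre), but the two decisive steps are interchanged, and the interchange is fatal. You claim that off the degeneration locus ``specialization identifies the Tate classes at $s$ with those at $\eta$, so the obstruction at such $s$ is bounded by $N_\eta$.'' This does not follow. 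Specialization only gives an inclusion $V^{\free,a}_{\Z_\ell,\eta}\subset V^{\free,a}_{\Z_\ell,s}$, and the \emph{extra} algebraic classes on a special fibre can span a badly non-saturated sublattice of $\widetilde{V}^{\free}_{\Z_\ell,s}$: enlarging $\Z_\ell\times 0$ to $\Z_\ell\oplus \ell^{N}\Z_\ell$ inside $\Z_\ell^{2}$ turns a torsion-free cokernel into one containing $\Z/\ell^{N}$. So $(C^{\free}_{\Z_\ell,s})_{\tors}$ is not controlled by $N_\eta$, and taming these new classes at the \emph{generic} points is precisely where the variational conjecture is needed: for $s\in|S|^{\gen}_{\cV_{\Q_\ell}}$ one has $V^{a}_{\Q_\ell,s}\subset (V_{\Q_\ell})^{\overline{G}_\ell}$ (after the reduction to connected monodromy), VSing/WVEt then forces $V^{a}_{\Q_\ell,s}=\Lambda_{\Q_\ell}$, and the snake-lemma estimate of Lemma \ref{Lem:Spec} bounds $(C^{\free}_{\Z_\ell,s})_{\tors}$ by the $s$-independent quantity $|(V^{\free}_{\Z_\ell}/\Lambda_{\Z_\ell})_{\tors}|$, which Artin comparison identifies with $(V^{\free}_{\Z}/\Lambda_{\Z})_{\tors}\otimes\Z_\ell$ and hence kills for $\ell\gg 0$.

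Symmetrically, you invoke the variational conjecture at the \emph{degenerate} points ``to produce algebraic cycles matching the new Tate classes.'' There it is both unnecessary and inapplicable: unnecessary because for fixed $\ell$ and $d$ these points are finite in number and each obstruction group is automatically finite (torsion of a finitely generated $\Z_\ell$-module), which is all the paper uses; inapplicable because the new Tate classes at a non-generic point are invariant only under the smaller group $G^{\circ}_{\ell,s}$, not under $\overline{G}_\ell$, hence do not lie in $\SH^{0}(S_{\bar k},R^{2i}f_{*}\Q_\ell(i))$, the only classes about which the variational statements say anything. Two further slips: the variational conjectures do not imply $\hbox{\rm Tate}_{\Q_\ell}(X_\eta,i)$ (the paper proves the reverse implication and explicitly remarks that its hypotheses need not give the rational Tate conjecture at generic points); and the vanishing of $(V_{\Z_\ell,s})_{\tors}$ for $\ell\gg 0$ is not a consequence of constructibility alone but of comparison with the integral singular (or Gabber's theorem in positive characteristic). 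As written, the proposal therefore does not close the main gap it correctly identifies, because it places the conjectural input at the wrong stratum.
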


\noindent  Our first main result is that Conjecture \ref{MC} holds when $S$ is a curve \textit{modulo}  some reasonable  variational realization conjecture, which we discuss now.

 \begin{itemize}[leftmargin=*,parsep=0cm,itemsep=0.2cm,topsep=0.2cm,label=-]

 \item {\bf Singular cohomology}: Fix an embedding $\infty: k\hookrightarrow \C$, let $(-)_\infty$ denote the base-change functor along $\Spec(\C)\stackrel{\infty}{\rightarrow}\Spec(k)$ and $(-)^{\an}$   the analytification functor from varieties over $\C$ to complex analytic spaces.   For every $  s_\infty\in S_\infty(\C)$   the cycle class maps for singular cohomology $$c: \CH^i(X_\infty)_{\Q}\rightarrow   \SH^{2i} (X_\infty^{\an},\Q(i)),\;\; c_{s_\infty}: \CH^i(X_{s_\infty})_{\Q}\rightarrow \SH^{2i} (X_{s_\infty}^{\an},\Q(i)) $$  fit into a canonical commutative diagram 
$$\xymatrix{\CH^i(X_\infty)_{\Q}\ar[rr]^{|_{X_{\infty,s}}}\ar[d]_{c }&&\CH^i(X_{s_\infty})_{\Q}\ar[d]^{c_{s_\infty}}\\
 \SH^{2i} (X_\infty^{\an},\Q(i))\ar[r]^(.4){\epsilon}&\SH^0(S_\infty^{\an},R^{2i}f_{\infty*}^{\an}\Q(i)) \ar@{^{(}->}[r]&\SH^{2i} (X_{s_\infty}^{\an},\Q(i)),}$$
 where  $\epsilon:  \SH^{2d} (X_{\infty}^{\an},\Q(i))\twoheadrightarrow E_{\infty}^{0,i}\hookrightarrow E_2^{0,i}=\SH^0(S_{\infty}^{\an},R^{2i}f_{\infty*}^{\an}\Q(i)) $
is the   edge morphism   from the Leray spectral sequence  for $f_{\infty}^{\an}:X_{\infty}^{\an}\rightarrow S_{\infty}^{\an}$.  

 \begin{enumerate}[leftmargin=*,parsep=0cm,itemsep=0.2cm,topsep=0.2cm,label=VSing{$^0(f_\infty,i)$}]
 \item For every  $  s_\infty\in S_\infty(\C)$   and $\alpha_{s_\infty}\in \SH^0(S^{\an}_{\infty}, R^{2i}f_{\infty*}^{\an}\Q(i))\subset  \SH^{2i}(X^{\an}_{s_\infty}, \Q(i))$ the following properties are equivalent:
  \begin{enumerate}[leftmargin=*,parsep=0cm,itemsep=0.2cm,topsep=0.2cm,label=\arabic*)]
 \item   $\alpha_{s_\infty}\in \hbox{\rm im}[c_{s,\Q}:\CH^i(X_{s_\infty})_{\Q}\rightarrow \SH^{2i}(X^{\an}_{s_\infty}, \Q(i))]$;  
 \item there exists $\widetilde{\alpha}\in \CH^i(X_{\infty})_{\Q}$ such that $c_{s_\infty}(\widetilde{\alpha}|_{X_{s_\infty }})=\alpha_{s_\infty}$. 
 \end{enumerate}
 \end{enumerate}
 \noindent   Though it does not involve  Hodge classes, the statement VSing$^0(f_\infty,i)$ is often referred to as the variational Hodge conjecture for codimension $i$ cycles because, by the fixed part theorem, it  follows from the Hodge conjecture for any  smooth compactification of $X_\infty$  - see \textit{e.g.} \cite[\S 3.1]{CSchnell} for details and an equivalent formulation using de Rham cohomology.  \textit{A priori} the  statement  $\hbox{\rm VSing}^0(f_\infty,i)$ is  not preserved by base-change along finite  covers of smooth varieties while the obstructions $\widetilde{\Ob}_{\Z_\ell,s}$, $s\in S$ are. So we will rather consider  the following   "stabilized" variant VSing$(f_\infty,i)$. For   finite  covers $S''_\infty\rightarrow S'_\infty\rightarrow S_\infty$ of smooth varieties, consider the notation in the  base-change diagram:
  $$\xymatrix{X''_\infty\ar[r]\ar[d]_{f''_\infty}\ar@{}[dr]|\square&X'_\infty\ar[r]\ar[d]_{f'_\infty}\ar@{}[dr]|\square&X_\infty\ar[d]^{f_\infty}\\
 S''_\infty\ar[r]& S'_\infty\ar[r]& S_\infty.}$$ 
  \begin{enumerate}[leftmargin=*,parsep=0cm,itemsep=0.2cm,topsep=0.2cm,label=VSing{$(f_\infty,i)$}]
 \item There exists a    finite  cover $S'_\infty\rightarrow S_\infty$ of   smooth varieties over $\C$ such that for every finite  cover $S''_\infty\rightarrow S'_\infty $ of smooth varieties over $\C$,  VSing$^0(f''_\infty,i)$ holds.
 \end{enumerate}

  \item  {\bf \'Etale $\Q_\ell$-cohomology}: The following is the $\Q_\ell$-\'etale counterpart of VSing$^0(f_\infty,i)$: 
 \begin{enumerate}[leftmargin=*,parsep=0cm,itemsep=0.2cm,topsep=0.2cm,label=VEt{$_{\Q_\ell}^0(f,i)$}]
 \item For every $  s\in |S|$ and $\alpha_s\in \SH^0(S_{\bar k}, R^{2i}f_*\Q_\ell(i))\subset  \SH^{2i}(X_{\bar s}, \Q_\ell(i))$ the following properties are equivalent:
  \begin{enumerate}[leftmargin=*,parsep=0cm,itemsep=0.2cm,topsep=0.2cm,label=\arabic*)]
 \item   $\alpha_s\in \hbox{\rm im}[c_{X_{\bar s},\ell}:\CH^i(X_{\bar s})_{\Q}\rightarrow \SH^{2i}(X_{\bar s}, \Q_\ell(i))]$;  
 \item there exists $\widetilde{\alpha}\in \CH^i(X_{\bar k})_{\Q}$ such that $c_{X_{\bar s}, \ell}(\widetilde{\alpha}|_{X_{\bar s}})=\alpha_s$. 
 \end{enumerate}
 \end{enumerate}
One could also consider the seemingly weaker variant WVEt$_{\Q_\ell}^0(f,i)$ where $\CH^i(X_{\bar s})_{\Q}$, $ \CH^i(X_{\bar k})_{\Q}$ are replaced with  $\CH^i(X_{\bar s})_{\Q_\ell}$, $ \CH^i(X_{\bar k})_{\Q_\ell}$,
 and the stabilized variants $   \hbox{\rm WVEt}_{\Q_\ell}(f,i)$, $  \hbox{\rm VEt}_{\Q_\ell}(f,i)$. Note that the statements $\hbox{\rm WVEt}^0_{\Q_\ell}(f,i)$, $\hbox{\rm VEt}^0_{\Q_\ell}(f,i)$ also make sense  when $p>0$.
\end{itemize}
  
 \begin{proposition}\label{Prop:VConjComp} If $p=0$,  one  has
  $$   \hbox{\rm WVEt}^0_{\Q_\ell}(f,i) \Leftrightarrow  \hbox{\rm VEt}^0_{\Q_\ell}(f,i) \Leftrightarrow \hbox{\rm VSing}^0(f_\infty,i).$$
In general, one always has   $\hbox{\rm VEt}^0_{\Q_\ell}(f,i)\Rightarrow \hbox{\rm WVEt}^0_{\Q_\ell}(f,i)$ and  $\hbox{\rm Tate}_{\Q_\ell}(X_\eta,i)\Rightarrow \hbox{\rm WVEt}_{\Q_\ell}(f,i)$.
\end{proposition}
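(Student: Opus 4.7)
The plan is to prove each of the listed implications by recasting it as an identity of subspaces of $V_s := \SH^{2i}(X_{\bar s},\Q_\ell(i))$. Setting $H^0_s := \SH^0(S_{\bar k}, R^{2i}f_*\Q_\ell(i)) = V_s^{\pi_1(S_{\bar k},\bar s)}$, $A_s \subseteq V_s$ for the $\Q$-subspace image of $\CH^i(X_{\bar s})_\Q \to V_s$, and $A'_{\bar k} \subseteq V_s$ for the image of $\CH^i(X_{\bar k})_\Q$ restricted to the fiber at $\bar s$, the statement $\hbox{\rm VEt}^0_{\Q_\ell}(f,i)$ becomes $A'_{\bar k} = A_s \cap H^0_s$ and $\hbox{\rm WVEt}^0_{\Q_\ell}(f,i)$ becomes $A'_{\bar k}\cdot\Q_\ell = (A_s\cdot\Q_\ell) \cap H^0_s$. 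In each case only the non-automatic inclusion requires proof.

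My first step is $\hbox{\rm VEt}^0 \Rightarrow \hbox{\rm WVEt}^0$, which reduces to the linear-algebraic identity
$$(A_s\cdot\Q_\ell)\cap H^0_s = (A_s\cap H^0_s)\cdot\Q_\ell.$$
In characteristic zero, Artin's comparison supplies compatible $\Q$-structures: $V_s = V^\Q_s\otimes_\Q\Q_\ell$ with $V^\Q_s := \SH^{2i}(X^{\an}_{s_\infty},\Q(i))$, $H^0_s = H^{0,\Q}_s\otimes_\Q\Q_\ell$ with $H^{0,\Q}_s := \SH^0(S^{\an}_\infty, R^{2i}f^{\an}_{\infty*}\Q(i))$, and $A_s \subseteq V^\Q_s$ because cycle classes factor through Betti; the identity then reduces to flatness of $\Q \to \Q_\ell$ applied to the exact sequence $0 \to A_s\cap H^{0,\Q}_s \to A_s\oplus H^{0,\Q}_s \to A_s + H^{0,\Q}_s \to 0$ inside $V^\Q_s$. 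In positive characteristic I would instead fix a decomposition $\alpha = \sum\lambda_j c(\beta_j)$ of $\alpha \in (A_s\cdot\Q_\ell)\cap H^0_s$, use the action of $\pi_1(S_{\bar k},\bar s)$ on the finite-dimensional $\Q_\ell$-envelope of this decomposition and a Maschke-style averaging of a $\Q_\ell$-lift of $\alpha$ to descend it to a $\Q_\ell$-combination of elements of $A_s\cap H^0_s$. This monodromy-theoretic descent in arbitrary characteristic is the most delicate point of the argument, as it must proceed without the benefit of a Betti $\Q$-structure.

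Next, in characteristic zero, the remaining equivalences follow from Artin's comparison. For $\hbox{\rm WVEt}^0 \Rightarrow \hbox{\rm VEt}^0$: given $\alpha \in A_s \cap H^0_s$, WVEt$^0$ yields $\alpha \in A'_{\bar k}\cdot\Q_\ell$; since both $A_s$ and $A'_{\bar k}$ sit in $V^\Q_s$, a $\Q$-basis of $V^\Q_s$ adapted to $A'_{\bar k}$ and a coordinate comparison force the $\Q_\ell$-coefficients of $\alpha$ along $A'_{\bar k}$ to be rational, whence $\alpha \in A'_{\bar k}$. For $\hbox{\rm VEt}^0 \Leftrightarrow \hbox{\rm VSing}^0$: Artin transfers $\alpha_s \leftrightarrow \alpha_{s_\infty}$ (the $\Q$-rationality of $A_s \subseteq V^\Q_s$ ensuring the two conditions correspond), and I would invoke the standard spreading-out identity that the image of $\CH^i(X_\C)_\Q$ in Betti cohomology coincides, under Artin, with that of $\CH^i(X_{\bar k})_\Q$ in étale cohomology: a cycle on $X_\C$ spreads to a cycle on $X_T$ for a finite-type $\bar k$-variety $T$ with a $\C$-point, and the Nullstellensatz produces a $\bar k$-point of $T$ yielding a cycle on $X_{\bar k}$ of the same cohomology class.

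Finally, for $\hbox{\rm Tate}_{\Q_\ell}(X_\eta,i) \Rightarrow \hbox{\rm WVEt}_{\Q_\ell}(f,i)$ in any characteristic, I would select a finite étale cover $S'\to S$, possibly over a finite extension $k'/k$, such that the image of $\pi_1(S'_{\bar k'},\bar\eta)$ in $\SGL(V_{\Q_\ell,\eta})$ is Zariski dense in $G_\ell^\circ$, so that $\SH^0(S'_{\bar k'},R^{2i}f'_*\Q_\ell(i)) = V_{\Q_\ell,\eta}^{G_\ell^\circ}$ coincides with the $\Q_\ell$-space of Tate classes on $X_\eta$. By Tate, each Tate class equals $c(\gamma)$ for some $\gamma \in \CH^i(X_{\bar\eta'})_{\Q_\ell}$; spreading $\gamma$ to a cycle over a dense open of $S'_{\bar k'}$ and taking its Zariski closure in $X'_{\bar k'}$ produces $\widetilde\gamma \in \CH^i(X'_{\bar k'})_{\Q_\ell}$, and the Leray edge map exhibits $s' \mapsto c(\widetilde\gamma|_{X_{\bar s'}})$ as a global section of $R^{2i}f'_*\Q_\ell(i)$ agreeing with $c(\gamma)$ at $\bar\eta'$, hence equal to $\alpha_{s'}$ at every $\bar s'$ by local constancy of the local system. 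The same construction survives under any further finite cover $S''\to S'$, establishing the stabilized WVEt.
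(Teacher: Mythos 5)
Your characteristic-zero arguments --- recasting the three statements as inclusions of subspaces of $V_{\Q_\ell}$, using Artin's comparison to place everything inside the Betti $\Q$-structure, and concluding by flatness of $\Q\hookrightarrow\Q_\ell$ together with the invariance of the image of the cycle class map under the extension of algebraically closed fields $\bar k\hookrightarrow\C$ --- are correct and are essentially the paper's proof. There are, however, two genuine gaps elsewhere.

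The serious one is in $\hbox{\rm Tate}_{\Q_\ell}(X_\eta,i)\Rightarrow\hbox{\rm WVEt}_{\Q_\ell}(f,i)$. No finite cover $S'\to S$ makes the image of $\pi_1(S'_{\bar k'})$ Zariski-dense in $G_\ell^\circ$: passing to a finite cover does not change $\overline{G}_\ell^{\circ}$, which is in general a \emph{proper} normal subgroup of $G_\ell^\circ$ (for an isotrivial family $\overline{G}_\ell^{\circ}$ is trivial while $G_\ell^\circ$ is the arithmetic group of the fibre). Consequently $\SH^0(S'_{\bar k'},R^{2i}f'_*\Q_\ell(i))=(V_{\Q_\ell})^{\pi_1(S'_{\bar k'})}$ contains, but in general strictly contains, the space $\widetilde V_{\Q_\ell,\eta}=(V_{\Q_\ell})^{G_\ell^\circ}$ of Tate classes of $X_\eta$, and your argument is then attempting to show that \emph{every} geometric monodromy invariant is algebraic --- which does not follow from $\hbox{\rm Tate}_{\Q_\ell}(X_\eta,i)$ and is false already for a constant family. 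The missing step is to use hypothesis 1): a class $\alpha_s$ algebraic on $X_{\bar s}$ is fixed by $G_{\ell,s}^\circ$, and, after the reductions making $\overline{G}_\ell$ and the $G_{\ell,s}$ connected, $G_\ell^\circ=\overline{G}_\ell^{\circ}G_{\ell,s}^\circ$, whence $V^a_{\Q_\ell,s}\cap(V_{\Q_\ell})^{\overline{G}_\ell}\subset\widetilde V_{\Q_\ell,\eta}$; only then does Tate for $X_\eta$ apply. The paper then concludes with the surjectivity of $\CH^i(X)\twoheadrightarrow\CH^i(X_\eta)$, after a further cover placing the generating cycles over $k(\eta)$, which plays the role of your spreading-out/Zariski-closure step.

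The second gap is your positive-characteristic treatment of $\hbox{\rm VEt}^0_{\Q_\ell}(f,i)\Rightarrow\hbox{\rm WVEt}^0_{\Q_\ell}(f,i)$. You are right that the needed identity is $(A_s\cdot\Q_\ell)\cap H^0_s=(A_s\cap H^0_s)\cdot\Q_\ell$, and this identity is false for a general $\Q$-subspace $A_s$ of a $\Q_\ell$-vector space and a general $\Q_\ell$-subspace $H^0_s$ (take $A_s=\Q(1,0)+\Q(t,1)\subset\Q_\ell^2$ with $t\in\Q_\ell\setminus\Q$ and $H^0_s=\Q_\ell(0,1)$), so some extra structure must be used; in characteristic zero that structure is precisely the common Betti $\Q$-structure. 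A Maschke-style projector onto $H^0_s$ is only $\Q_\ell$-linear: $\pi_1(S_{\bar k})$ does not act on $\CH^i(X_{\bar s})$, $A_s$ is not monodromy-stable, and there is no reason the averaged classes land back in $A_s$, so this step does not close as written. (The paper itself dispatches this implication in one line from the fact that $V^a_{\Q_\ell,s}$ is the $\Q_\ell$-span of $V^a_{\ell,\Q,s}$; in any case only the characteristic-zero instance, where your Artin/flatness argument works, is used in the proofs of the main theorems.)
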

\noindent We will give a proof of this proposition in section \ref{Sec:ProofPropConjComp}.

 \noindent In particular, when $p=0$,    $\hbox{\rm VSing}^0(f_\infty,i)$ is independent of the embedding $\infty:k\hookrightarrow \C$ and $\hbox{\rm WVEt}^0_{\Q_\ell}(f,i)$,  $\hbox{\rm VEt}^0_{\Q_\ell}(f,i)$ are  independent of the prime $\ell$.  \\

\noindent We can now state our first main result.

\begin{thmABC}\label{MainTh1} Assume $S$ is a curve and  VSing$(f_\infty,i)$ holds  for one (equivalently every) embedding $\infty:k\hookrightarrow \C$.
Then, for every integer $d\geq 1$, one has 
  $\widetilde{\Ob}_{\Z_\ell}^{\leq d}  <+\infty $ and  $\widetilde{\Ob}_{\Z_\ell}^{\leq d}=1$ for $\ell\gg 0$ (depending on $d$).
\end{thmABC}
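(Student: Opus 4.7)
The plan is to combine three ingredients. First, the exact sequences (\ref{Eq:Obstruction1}) and (\ref{Eq:Obstruction2}) of the introduction, which reduce the problem to bounding separately $(C^{\free}_{\Z_\ell,s})_{\tors}$ and the quotient $(V_{\Z_\ell,s})_{\tors}/(V_{\Z_\ell,s}^a)_{\tors}$; for the latter, the torsion subgroup $(V_{\Z_\ell,s})_{\tors}$ is a constituent of the constructible sheaf $R^{2i}f_{*}\Z_\ell(i)_{\tors}$, hence constant in $s$ (and trivial for $\ell\gg 0$), so it is enough to bound how much of this constant torsion fails to be hit by $V_{\Z_\ell,s}^a$. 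Second, the variational conjecture VSing$(f_\infty,i)$, converted via Proposition \ref{Prop:VConjComp} into VEt$_{\Q_\ell}(f,i)$, which ensures that algebraic classes on a fibre that are "invariant under specialization" lift to algebraic classes on the total space (after passing to a finite cover of $S$, which is harmless because $\widetilde{\Ob}_{\Z_\ell,s}$ only changes by a bounded factor under such base change). Third, the structure theorem Fact \ref{GenericPts:Spar1} for the degeneration locus of the $\ell$-adic local system $R^{2i}f_{*}\Z_\ell(i)$, whose full strength is the source of the uniformity.

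\textbf{Step 1: Rational Tate at the generic fibre and propagation.} By VEt$_{\Q_\ell}(f,i)$ applied to the generic point (and to every closed point of $S$), Tate$_{\Q_\ell}(X_\eta,i)$ holds. Hence the Tate module $\widetilde{V}_{\Q_\ell,\eta}$ is generated by cycle classes of cycles on $X_\eta$, each of which extends (up to clearing denominators, and on a finite cover) to a cycle on $X$ whose restriction to $X_{\bar s}$ realises, for every $s\in |S|$, the specialisation of the chosen Tate class at $\eta$. In particular, the subgroup $W_s\subset V_{\Z_\ell,s}^a$ spanned by such restrictions has finite cokernel, uniform in $s$, inside the saturation $\widetilde{W}_s^{\gen}\subset \widetilde V_{\Z_\ell,s}$ consisting of Tate classes that are specialisations of generic Tate classes. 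This takes care of the "generic" contribution to the obstruction.

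\textbf{Step 2: Controlling jumps via Fact \ref{GenericPts:Spar1}.} What remains is the "extra" Tate classes appearing only at special points: the quotient $\widetilde V_{\Z_\ell,s}^{\free}/\widetilde W_s^{\gen,\free}$. Fact \ref{GenericPts:Spar1} describes the degeneration locus of an $\ell$-adic local system as a finite union of closed subvarieties with uniform bounds on the size of the jump, and crucially provides an $\ell_0$ such that beyond it no jump occurs. Since $S$ is a curve, each such proper closed subvariety has finitely many closed points, and the number of closed points of residue degree $\leq d$ on a finite set is bounded in terms of $d$. Combined with Step 1 and with the fact that at a jumping point $s$ the rational Tate conjecture Tate$_{\Q_\ell}(X_s,i)$ still holds (by applying VEt$_{\Q_\ell}$ at $s$), this yields the finiteness of $\widetilde{\Ob}_{\Z_\ell}^{\leq d}$: at non-jumping points the obstruction is controlled by $(V_{\Z_\ell})_{\tors}$ alone, while jumping points of residue degree $\leq d$ form a finite set on which each individual obstruction is finite by the integral Tate conjecture modulo finite kernel obstructions. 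For $\ell\gg 0$, both $(V_{\Z_\ell})_{\tors}$ vanishes and the degeneration locus on $|S|^{\leq d}$ is empty, giving $\widetilde{\Ob}_{\Z_\ell}^{\leq d}=1$.

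\textbf{Main obstacle.} The delicate point is the integral part of Step 2: Fact \ref{GenericPts:Spar1} naturally produces information on the Zariski closure of the monodromy representation, which is inherently a rational (or at best residually mod $\ell$) statement, whereas the obstruction $(\widetilde C_{\Z_\ell,s})_{\tors}$ is a genuinely integral invariant. Bridging this gap requires carefully tracking how the integral lattice $V_{\Z_\ell,s}$ intersects the rational Tate subspace, and exploiting that $S$ is one-dimensional so that the jumping locus is a finite set whose contributions can be absorbed into a uniform bound. A second, more technical point is the compatibility between the finite cover of $S$ introduced to apply the stabilised VSing$(f_\infty,i)$ and the count $|S|^{\leq d}$: one has to check that the bound obtained upstairs descends to a bound downstairs, which is where the degree $d$ explicitly enters the final constant.
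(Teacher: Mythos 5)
Your overall skeleton --- reduce to $\cV_{\Q_\ell}$-generic points via Fact \ref{GenericPts:Spar1}, treat the finitely many non-generic points of degree $\leq d$ separately, and pass to a finite cover to use the stabilised variational conjecture --- is indeed the paper's. But Step 1 contains a genuine error that propagates through the rest: VSing/VEt$_{\Q_\ell}(f,i)$ does \emph{not} imply Tate$_{\Q_\ell}(X_\eta,i)$, nor that $\widetilde{V}_{\Q_\ell,\eta}$ is spanned by cycle classes. The variational conjectures only assert that a monodromy-invariant class which is algebraic on \emph{one} fibre extends to a global algebraic class; they say nothing about the algebraicity of Tate classes (the paper states explicitly, in the Remark following Lemma \ref{MainLem2}, that its hypotheses do not yield Tate$_{\Q_\ell}(X_s,i)$ even at generic points). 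Consequently your subgroup $W_s$ "spanning the Tate classes up to uniformly finite cokernel" is not available, and the same false deduction reappears in Step 2 ("Tate$_{\Q_\ell}(X_s,i)$ still holds by applying VEt$_{\Q_\ell}$ at $s$"). A smaller point: finiteness of the obstruction at an individual non-generic point needs no conjecture at all --- it is the torsion subgroup of a quotient of a finitely generated $\Z_\ell$-module.

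The correct argument avoids Tate classes entirely, because $(\widetilde{C}^{\free}_{\Z_\ell,s})_{\tors}=(C^{\free}_{\Z_\ell,s})_{\tors}$ is the torsion of $V^{\free}_{\Z_\ell}/V^{\free,a}_{\Z_\ell,s}$: it measures the lattice of algebraic classes against the ambient lattice, not against the Tate module. What one proves at a generic point $s$ is the equality $V^{a}_{\Q_\ell,s}=\Lambda_{\Q_\ell}$, where $\Lambda_{\Z_\ell}$ is the $s$-independent image of $\CH^i(X_{\bar k})_{\Z_\ell}$ in $V^{\free}_{\Z_\ell}$: every algebraic class on $X_{\bar s}$ is fixed by $G^{\circ}_{\ell,s}=G^{\circ}_{\ell}\supset\overline{G}^{\circ}_{\ell}$, hence (after the connectedness reduction) lies in $(V_{\Q_\ell})^{\overline{G}_\ell}$, hence by the variational conjecture comes from $X_{\bar k}$. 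A snake-lemma argument (Lemma \ref{Lem:Spec}) then bounds $\Ob^{\free}_{\Z_\ell,s}$ by the single constant $|(V^{\free}_{\Z_\ell}/\Lambda_{\Z_\ell})_{\tors}|$ for all generic $s$ simultaneously. The second ingredient missing from your proposal is the mechanism for the $\ell\gg 0$ statement: by Artin's comparison $\Lambda_{\Z_\ell}=\Lambda_{\Z}\otimes_{\Z}\Z_\ell$ for one $\Z$-lattice $\Lambda_{\Z}\subset V^{\free}_{\Z}$ in singular cohomology, so $(V^{\free}_{\Z_\ell}/\Lambda_{\Z_\ell})_{\tors}\simeq(V^{\free}_{\Z}/\Lambda_{\Z})_{\tors}\otimes_{\Z}\Z_\ell$, which is bounded independently of $\ell$ and vanishes for $\ell\gg 0$. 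Your "main obstacle" paragraph correctly identifies the rational-to-integral gap, but the proposal supplies neither the lattice $\Lambda_{\Z_\ell}$ nor its $\Z$-structure, which are precisely what close it.
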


  \subsubsection{}\hspace{-0.3cm}\label{Sec:MainThm2}Assume now $p>0$. One has a variant of Theorem \ref{MainTh1} for $d=1$  but it is slightly more technical. To state it, one has to make a mild assumption on the $\Q_\ell$-local system $\cV_{\Q_\ell}:=R^{2i}f_*\Q_\ell(i)$, namely that it is GLU   - see Subsection \ref{Sec:GenericPtsSpar} for the definition. One also needs a substitute for  VSing$(f_\infty,i)$.  According to Proposition \ref{Prop:VConjComp},  a first  substitute is WVEt$_{\Q_\ell}(f,i)$. Another natural  substitute  is  the variational realization conjecture in crystalline cohomology  VCrys$(f,i)$.  This is more subtle. Indeed, as  crystalline cohomology is only well-behaved over a perfect residue field,   one has  first  to spread  out all the involved data over a finite base field. Another difficulty is that the proof of Theorem \ref{MainTh1} heavily relies on Artin's comparison isomorphism bewteen \'etale and singular cohomology. But there is no such a direct functorial  comparison isomorphism between crystalline and  \'etale cohomology; to remedy this, one has to invoke   a weak form -  CrysEt$_{\Q_\ell}(f,i)$ of the motivic conjecture predicting that homological and numerical equivalence should coincide (combined with a  theorem of Ambrosi  - see Fact \ref{Emiliano}).  \\

\noindent We now state  VCrys$(f,i)$ and  CrysEt$_{\Q_\ell}(f,i)$. Let $F$ denote the algebraic closure of $\F_p$ in $k$ and let  $\mathscr{K}$ be a smooth, separated, geometrically connected scheme over $F$ with generic point $\eta_{\mathscr{K}}: \Spec(k)\rightarrow \mathscr{K}$, let $\mathcal{S}\rightarrow \mathscr{K}$ be a smooth, separated and geometrically connected morphism and  $f:\mathcal{X}\rightarrow \mathcal{S}$ a smooth proper morphism  fitting in the  following  Cartesian diagram
$$\xymatrix{\mathcal{X}\ar[r]^f\ar@{}[dr]|\square& \mathcal{S}\ar[r]\ar@{}[dr]|\square&\mathscr{K}\\
X\ar[r]_f\ar[u]& S\ar[r]\ar[u]&k\ar[u]_{\eta_{\mathscr{K}}}}$$
\noindent Let $K$ denote the fraction field of the ring $W$ of Witt vectors of $F$. For a $F$-scheme $\cZ$,   write   $ \SH^i_{\crys}(\mathcal{Z}):= \SH^i_{\crys}(\mathcal{Z}/W)_K$ for  the crystalline cohomology with $K$-coefficients and  
 $$c_{\crys}: \CH^i(\mathcal{Z})_{\Q}\rightarrow   \SH^{2i}_{\crys}(\mathcal{Z})  $$ for  the cycle class map. For every $t\in |\mathcal{S}|$  the cycle class maps $$c_{\crys}: \CH^i(\mathcal{X}) \rightarrow   \SH^{2i}_{\crys}(\mathcal{X}),\;\; c_{\crys,t}: \CH^i(\mathcal{X}_t) \rightarrow \SH^{2i}_{\crys}(\mathcal{X}_t)  $$  fit into a canonical commutative diagram 
$$\xymatrix{\CH^i(\mathcal{X})_{\Q}\ar[rr]^{|_{\cX_{t}}}\ar[d]_{c_{\crys}}&&\CH^i(\mathcal{X}_t)_{\Q}\ar[d]^{c_{\crys,t}}\\
\SH^{2i}_{\crys}(\mathcal{X})\ar[r]^(.3){\epsilon}&\SH^0(\mathcal{S},R^{2i}f_{\crys,*}\mathcal{O}_{\mathcal{X}/W})_K\ar@{^{(}->}[r]&\SH^{2i}_{\crys}(\mathcal{X}_t),}$$
where $\epsilon: \SH^{2i}_{\crys}(\mathcal{X})\twoheadrightarrow E_{\infty}^{0,i}\hookrightarrow\SH^0(\mathcal{S},R^{2i}f_{\crys,*}\mathcal{O}_{\mathcal{X}/W})_K$
is, again, the  edge morphism from the Leray spectral sequence  for $f: \cX\rightarrow \cS$ 
in crystalline cohomology - see \cite[\S 1]{Morrow} and the references therein for details.  The following   is the crystalline analogue of  $\hbox{\rm VSing}^0(f_\infty,i)$, $\hbox{\rm VEt}^0_{\Q_\ell}(f,i)$ \cite[Conj. 0.1]{Morrow}. \\

 \begin{enumerate}[leftmargin=*,parsep=0cm,itemsep=0cm,topsep=0cm,label=VCrys{$^0(f,i)$}]
 \item For every  $t\in |\mathcal{S}|$ and $\alpha_t\in \SH^0(\mathcal{S},R^{2i}f_{\crys,*}\mathcal{O}_{\mathcal{X}/W})_{\Q}\subset \SH^{2i}_{\crys}(\mathcal{X}_t)$ the following properties are equivalent.
  \begin{enumerate}[leftmargin=*,parsep=0cm,itemsep=0.2cm,topsep=0.2cm,label=\arabic*)]
 \item   $\alpha_t\in   \hbox{\rm im}[ c_{\crys,t}: \CH^i(\mathcal{X}_t)_{\Q}\rightarrow \SH^{2i}_{\crys}(\mathcal{X}_t)]$; 
  \item there exists $\widetilde{\alpha}\in \CH^i(\mathcal{X})_{\Q}$ such that $c_{\crys,t}(\widetilde{\alpha}|_{\mathcal{X}_t})=\alpha_t$.
 \end{enumerate}
 \end{enumerate}
 \noindent As before, let VCrys{$(f,i)$ denote its stabilized variant.\\
 
  \noindent Also, consider the  following statement 
    \begin{enumerate}[leftmargin=*,parsep=0cm,itemsep=0.2cm,topsep=0.2cm,label=CrysEt{$_{\Q_\ell}(f,i)$}]
 \item  For every $t\in |\mathcal{S}|$, the kernel of the cycle class maps
$$  c_{\crys,t}: \CH^i(\mathcal{X}_t)_{\Q}\rightarrow \SH^{2i}_{\crys}(\mathcal{X}_t), \;\;  c_{\ell,t}: \CH^i(\mathcal{X}_t)_{\Q}\rightarrow \SH^{2i} (\mathcal{X}_{\bar{t}},\Q_\ell) $$ 
coincide,
\end{enumerate}
which  follows from  the standard conjecture predicting that   homological   and numerical equivalences should coincide, which, in turn, is a   consequence of the conjecture predicting that the category of effective motives should be abelian semisimple \cite{Jannsen}.\\

 \noindent We can now state the analogue of Theorem \ref{MainTh1} when $p>0$.

\begin{thmABC}\label{MainTh2} Assume $S$ is a curve,  $\cV_{\Q_\ell}$ is GLU and either 
(i)  $\hbox{\rm WVEt}_{\Q_\ell}(f,i)$ or (ii) 
 $\hbox{\rm VCrys}(f,i) + \hbox{\rm CrysEt}_{\Q_\ell}(f,i)$ holds.
Then,  one has $\widetilde{\Ob}_{\Z_\ell}^{\leq 1}  <+\infty  $.
\end{thmABC}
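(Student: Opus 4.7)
The strategy is to adapt the proof of Theorem \ref{MainTh1} to positive characteristic, replacing the singular variational conjecture $\hbox{VSing}(f_\infty,i)$ with its $\ell$-adic or crystalline analogue. The central player remains the $\Z_\ell$-local system $\cV_{\Z_\ell}:=R^{2i}f_*\Z_\ell(i)$ on $S$, with generic monodromy group $G_{\ell,\eta}\subset\SGL(\cV_{\Q_\ell,\eta})$. The first step is to invoke the structure theorem for the degeneration locus of $\ell$-adic local systems (Fact~\ref{GenericPts:Spar1}) to compare, for each $s\in|S|^{\leq 1}$, the specialized monodromy $G_{\ell,s}$ with $G_{\ell,\eta}$. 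The GLU hypothesis on $\cV_{\Q_\ell}$ is precisely what guarantees that this comparison is uniform in $\ell$ and in $s$, so that outside a controlled sparse locus the Tate classes at $s$ come from global Tate classes on $X_{\bar k}$.

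Concretely, fix $s\in |S|^{\leq 1}$. Specialization identifies $\widetilde{V}_{\Q_\ell,s}$ with the $G_{\ell,s}^\circ$-invariants in the stalk of $\cV_{\Q_\ell}$ at $\bar s$, and Fact~\ref{GenericPts:Spar1} allows one, after passing to a finite cover of $S$, to descend a Tate class $\alpha_s\in \widetilde{V}_{\Q_\ell,s}$ to a global section $\widetilde{\alpha}\in\SH^0(S_{\bar k},R^{2i}f_*\Q_\ell(i))$. In case (i), applying $\hbox{WVEt}_{\Q_\ell}(f,i)$ to $\widetilde{\alpha}$ produces a class in $\CH^i(X_{\bar k})_{\Q_\ell}$ whose restriction to $X_{\bar s}$ is $\alpha_s$. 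Feeding this back into diagram~(\ref{Diagram:Basic}) at $s$ shows that the quotient $\widetilde{V}_{\Q_\ell,s}/V^a_{\Q_\ell,s}$ vanishes, and a clearing-of-denominators argument together with the short exact sequences~(\ref{Eq:Obstruction1}) and~(\ref{Eq:Obstruction2}) then controls $(\widetilde{C}_{\Z_\ell,s})_{\tors}$ by a quantity depending only on the torsion of $V_{\Z_\ell,\eta}$ and on a bounded denominator coming from the finite cover.

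For case (ii), the extra difficulty is that $\hbox{VCrys}(f,i)$ lives in crystalline cohomology, whereas the obstructions are $\ell$-adic. I would first spread out $f:X\to S$ into a family $f:\cX\to\cS\to\mathscr{K}$ over the finite field $F$ as in the setup, apply $\hbox{VCrys}(f,i)$ to lift a Tate-in-crystalline-cohomology class at a suitable $t\in|\cS|$ specializing $s$ to a global crystalline cycle class on $\cX$, and then use $\hbox{CrysEt}_{\Q_\ell}(f,i)$ together with Ambrosi's comparison (Fact~\ref{Emiliano}) to transfer the statement back into $\ell$-adic cohomology. Once a global $\ell$-adic lift is in hand, the remainder of the argument proceeds verbatim as in case (i).

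The main obstacle will be the uniform control of the torsion obstruction across all closed points of degree $1$. The structure theorem handles only the complement of a sparse locus, and at the remaining degenerate points one cannot descend Tate classes to the generic fiber directly. The GLU hypothesis is designed exactly to bound, uniformly in $s$ and $\ell\gg 0$, the index $[G_{\ell,s}:G_{\ell,s}^\circ]$ and the discrepancy between $G_{\ell,s}^\circ$-invariants and the specialization of $G_{\ell,\eta}^\circ$-invariants; so the gap between "locally Tate" and "globally Tate" classes at degenerate $s$ contributes only a finite torsion piece of bounded order, which merges with the $V_{\Z_\ell,\tors}$-contribution of~(\ref{Eq:Obstruction1}). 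Combining the two regimes yields the finite bound on $\widetilde{\Ob}_{\Z_\ell}^{\leq 1}$.
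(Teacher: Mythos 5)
Your overall skeleton (split $S(k)$ into $\cV_{\Q_\ell}$-generic and non-generic points, dispose of the non-generic ones by sparsity, and use the variational conjecture at the generic ones) matches the paper's, but two of your key steps are wrong or missing. First, a citation/logic issue: in characteristic $p>0$ the relevant sparsity statement is Fact~\ref{GenericPts:Spar2} (Tamagawa), not Fact~\ref{GenericPts:Spar1}, which requires $p=0$ and GLP. The GLU hypothesis enters \emph{only} to make $|S|^{\ngen}_{\cV_{\Q_\ell}}\cap S(k)$ finite; it does not bound $[G_{\ell,s}:G_{\ell,s}^\circ]$, does not compare invariants at degenerate points, and gives no uniformity in $\ell$ (indeed the paper's remark after Theorem~\ref{MainTh2} leaves open whether $\widetilde{\Ob}^{\leq 1}_{\Z_\ell}=1$ for $\ell\gg0$). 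At the finitely many non-generic points no further argument is needed: each $(\widetilde{C}_{\Z_\ell,s})_{\tors}$ is a finite group because $\widetilde{C}_{\Z_\ell,s}$ is a finitely generated $\Z_\ell$-module, so a finite set of such points contributes a finite sup. Your last paragraph, which treats these points as the "main obstacle" and invokes GLU to control them, is therefore both unnecessary and based on a misreading of what GLU asserts.

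The genuine gap is at the generic points. You apply $\hbox{\rm WVEt}_{\Q_\ell}(f,i)$ to an \emph{arbitrary} Tate class $\alpha_s\in\widetilde{V}_{\Q_\ell,s}$ and conclude that $\widetilde{V}_{\Q_\ell,s}/V^a_{\Q_\ell,s}$ vanishes, i.e.\ that $\hbox{\rm Tate}_{\Q_\ell}(X_s,i)$ holds. But $\hbox{\rm WVEt}_{\Q_\ell}(f,i)$ is an equivalence between "$\alpha_s$ is algebraic on $X_{\bar s}$" and "$\alpha_s$ extends to a global algebraic class", valid only for $\alpha_s$ lying in $\SH^0(S_{\bar k},R^{2i}f_*\Q_\ell(i))$; it says nothing about Tate classes that are not already known to be algebraic, and the paper explicitly notes that the hypotheses do not imply the rational Tate conjecture at $s$. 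What the hypothesis actually yields (via the chain $V^a_{\Q_\ell,s}\subset\widetilde V_{\Q_\ell,s}=\widetilde V_{\Q_\ell,\eta}\subset(V_{\Q_\ell})^{\overline G_\ell}$ after the reduction to connected monodromy) is the equality $V^a_{\Q_\ell,s}=\Lambda_{\Q_\ell}$, where $\Lambda_{\Z_\ell}$ is the image of $\CH^i(X_{\bar k})_{\Z_\ell}$ in $V^{\free}_{\Z_\ell}$. That lattice is independent of $s$ (Lemma~\ref{Lem:LatticeEt}), and the uniform bound $\Ob^{\free}_{\Z_\ell,s}\leq|(V^{\free}_{\Z_\ell}/\Lambda_{\Z_\ell})_{\tors}|$ then follows from the snake-lemma argument of Lemma~\ref{Lem:Spec}. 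Your "clearing-of-denominators" step has no substitute for this $s$-independent lattice, so even if one repaired the misuse of $\hbox{\rm WVEt}$, the bound you obtain would a priori depend on $s$. The same correction applies to your case (ii): the crystalline argument (spreading out, $\hbox{\rm VCrys}$ at $t\in|\mathcal S|$, $\hbox{\rm CrysEt}_{\Q_\ell}$ and Fact~\ref{Emiliano}) must target $V^a_{\Q_\ell,s}=\Lambda_{\Q_\ell}$, not the rational Tate conjecture at $s$.
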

\begin{remarque}We do not know if, under the assumptions of Theorem \ref{MainTh2},  $ \widetilde{\Ob}_{\Z_\ell}^{\leq 1}=0$, $\ell\gg 0$.  
\end{remarque}

 \subsubsection{Unramified cohomology}\label{Sec:UCoh}   When $i=2$, $(\widetilde{C}_{\Z_\ell,s})_{\tors}$ can be  described in terms of degree $3$ unramified cohomology. More precisely, set $C_{\Z_\ell,s}:=V_{\Z_\ell}/V_{\Z_\ell,s}^a$. From the short exact sequence 
 $$0\rightarrow  \widetilde{C}_{\Z_\ell,s}\rightarrow C_{\Z_\ell,s} \rightarrow V_{\Z_\ell,s}/\widetilde{V}_{\Z_\ell,s}\rightarrow 0$$
and the fact that $ V_{\Z_\ell,s}/\widetilde{V}_{\Z_\ell,s}$ is torsion-free, one has $(\widetilde{C}_{\Z_\ell,s})_{\tors}=(C_{\Z_\ell,s})_{\tors}$. If $i=2$, \cite[Thm. 2.2]{CTK} states that $(C_{\Z_\ell,s})_{\tors}$ is isomorphic to 
$$ \SH^3_{nr}(X_{\bar s},\Q_\ell/\Z_\ell(2))_{\hbox{\rm \tiny ndiv}}\stackrel{def}{=}\hbox{\rm coker}[\SH^3_{nr}(X_{\bar s},\Q_\ell/\Z_\ell(2))_{\hbox{\rm \tiny div}}\rightarrow \SH^3_{nr}(X_{\bar s},\Q_\ell/\Z_\ell(2))].$$
Here for an abelian  group $A$, we  let $A_{\hbox{\rm \tiny div}}\subset A$ denote its maximal divisible subgroup.\\

\noindent  Hence  Theorem \ref{MainTh1} and Theorem \ref{MainTh2} imply:
\begin{corollaire}\label{Cor:H3nr}Assume $S$ is a curve.
\begin{enumerate}[leftmargin=*,parsep=0cm,itemsep=0.2cm,topsep=0.2cm ]
\item Assume $p=0$ and   VSing$(f_\infty,i)$ for some embedding $\infty:k\hookrightarrow \C$ holds. Then, for every integer $d\geq 1$, $$\hbox{\rm sup}\lbrace |\SH^3_{nr}(X_{\bar s},\Q_{\ell} /\Z_{\ell} (2))_{\hbox{\rm \tiny ndiv}}|\; |\;  s\in |S|^{\leq d}  \rbrace | <+\infty,$$
and $\SH^3_{nr}(X_{\bar s},\Q_{\ell} /\Z_{\ell} (2))_{\hbox{\rm \tiny ndiv}}=0$, $s\in |S|^{\leq d}$ for $\ell\gg 0$ (depending on  $d$).  
 
\item  Assume $p>0$, $\cV_{\Q_\ell}$ is GLU and either  (i) $\hbox{\rm WVEt}_{\Q_\ell}(f,i)$ or  (ii) VCrys$(f,i)$ + CrysEt$_{\Q_\ell}(f,i)$ holds. 
Then, $$\hbox{\rm sup}\lbrace |\SH^3_{nr}(X_{\bar s},\Q_{\ell} /\Z_{\ell} (2))_{\hbox{\rm \tiny ndiv}}|\; |\;  s\in  S(k) \rbrace | <+\infty,$$  
 and $\SH^3_{nr}(X_{\bar s},\Q_{\ell} /\Z_{\ell} (2))_{\hbox{\rm \tiny ndiv}}=0$, $s\in  S(k)$ for $\ell\gg 0$. 

\end{enumerate}
 
 \end{corollaire}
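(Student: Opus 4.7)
The plan is to deduce the corollary directly from Theorems~\ref{MainTh1} and~\ref{MainTh2} by translating the obstruction $\widetilde{\Ob}_{\Z_\ell,s}$ into unramified cohomology via the two identifications already assembled in Subsection~\ref{Sec:UCoh}. First, for every $s\in |S|$, the short exact sequence
$$0\rightarrow \widetilde{C}_{\Z_\ell,s}\rightarrow C_{\Z_\ell,s}\rightarrow V_{\Z_\ell,s}/\widetilde{V}_{\Z_\ell,s}\rightarrow 0$$
combined with the torsion-freeness of $V_{\Z_\ell,s}/\widetilde{V}_{\Z_\ell,s}$ (itself a consequence of the rightmost Cartesian square in Diagram~(\ref{Diagram:Basic})) yields $(\widetilde{C}_{\Z_\ell,s})_{\tors}=(C_{\Z_\ell,s})_{\tors}$. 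Specialising to $i=2$ and invoking \cite[Thm.~2.2]{CTK} then provides a canonical isomorphism
$$(C_{\Z_\ell,s})_{\tors}\;\simeq\;\SH^3_{nr}(X_{\bar s},\Q_\ell/\Z_\ell(2))_{\hbox{\rm \tiny ndiv}},$$
so that $\widetilde{\Ob}_{\Z_\ell,s}=|\SH^3_{nr}(X_{\bar s},\Q_\ell/\Z_\ell(2))_{\hbox{\rm \tiny ndiv}}|$ for every $s\in |S|$.

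For part~(1), the hypotheses are precisely those of Theorem~\ref{MainTh1} applied at $i=2$, giving both $\widetilde{\Ob}_{\Z_\ell}^{\leq d}<+\infty$ for each $d\geq 1$ and $\widetilde{\Ob}_{\Z_\ell}^{\leq d}=1$ for $\ell\gg 0$ (depending on $d$); substituting the identification above delivers exactly the asserted uniform finiteness and the eventual vanishing of $\SH^3_{nr}(X_{\bar s},\Q_\ell/\Z_\ell(2))_{\hbox{\rm \tiny ndiv}}$ over $|S|^{\leq d}$. For part~(2), the hypotheses are those of Theorem~\ref{MainTh2}, which yields $\widetilde{\Ob}_{\Z_\ell}^{\leq 1}<+\infty$; since $S(k)\subset |S|^{\leq 1}$ (rational points are exactly the closed points of residue degree one over $k$), the same substitution furnishes the claimed uniform bound over $s\in S(k)$.

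The argument is thus a routine repackaging: all of the real content has been absorbed into Theorems~\ref{MainTh1}, \ref{MainTh2} and into the cited theorem of Colliot-Th\'el\`ene--Kahn, so no substantive obstacle is expected. The one point requiring mild care is the eventual-vanishing assertion in~(2) for $\ell\gg 0$: in view of the remark following Theorem~\ref{MainTh2}, this part of the statement goes beyond what Theorem~\ref{MainTh2} currently delivers and must either be read as conditional on a strengthening of Theorem~\ref{MainTh2} analogous to the one available in Theorem~\ref{MainTh1}, or else restricted to the finiteness assertion; the remainder of the proof is otherwise straightforward.
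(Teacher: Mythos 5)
Your proof is correct and is exactly the paper's argument: the identification $(\widetilde{C}_{\Z_\ell,s})_{\tors}=(C_{\Z_\ell,s})_{\tors}\simeq\SH^3_{nr}(X_{\bar s},\Q_\ell/\Z_\ell(2))_{\hbox{\rm \tiny ndiv}}$ via the torsion-freeness of $V_{\Z_\ell,s}/\widetilde{V}_{\Z_\ell,s}$ and \cite[Thm.~2.2]{CTK}, followed by a direct application of Theorems~\ref{MainTh1} and~\ref{MainTh2}. Your caveat on part~(2) is well taken: Theorem~\ref{MainTh2} only yields $\widetilde{\Ob}_{\Z_\ell}^{\leq 1}<+\infty$, and the remark following it explicitly leaves the $\ell\gg 0$ vanishing open, so that final clause of the corollary is not supported by the quoted results as stated.
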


\subsection{Acknowledgements} 
The second author is partially supported by the NSF DMS-2201195 grant. 

 $$***$$

\noindent In Section \ref{Sec:Et} we review basic properties of cycle class maps for \'etale $\Z_\ell$-cohomology in families, introduce the notion of $\cV_{\Q_\ell}$-generic points and describe the general strategy for the proof of Theorem \ref{MainTh1} and Theorem \ref{MainTh2}. In  Section \ref{Sec:Comp}, we inject comparison with singular cohomology - Subsection \ref{Sec:Sing}, to prove  Proposition \ref{Prop:VConjComp} and conclude the proofs of  Theorem \ref{MainTh1}, and with crystalline cohomology  - Subsection \ref{Sec:Crys}, to  conclude the proof of   Theorem \ref{MainTh2}. In Subsection \ref{Sec:ObstructionIHC}, we also explain how to derive from Theorem \ref{MainTh1} its variant in the setting of the integral Hodge conjecture.

  \section{\'Etale cycle class maps in families and global  strategy}

 \subsection{\'Etale $\Z_\ell$-local systems}\label{Sec:Et}  Let $S$ be a smooth, geometrically connected variety over $k$.  For every $s\in S $,  fix a geometric point $\bar s$ over it and an \'etale path $\alpha_{\bar s}:(-)_{\bar s}\tilde{\rightarrow} (-)_{\bar \eta}$. In particular, for every $\Z_\ell$-local system   $\cV_{\Z_\ell}$  on $S$, one identifies $\cV_{\Z_\ell,\bar s}\tilde{\rightarrow} \cV_{\Z_\ell,\bar \eta}$ equivariantly with respect to the isomorphism of \'etale fundamental groups $\pi_1(S,\bar s)\tilde{\rightarrow} \pi_1(S,\bar \eta)$, $\gamma\mapsto \alpha_{\bar s}\gamma \alpha_{\bar s}^{-1}$. As a result, we will in general omit fiber functors from our notation and simply write
 $$V_{\Z_\ell}:=\cV_{\Z_\ell,\bar s}\tilde{\rightarrow} \cV_{\Z_\ell,\bar \eta}, \;\; V_{\Q_\ell}:=V_{\Z_\ell}\otimes_{\Z_\ell}\Q_\ell.$$
  Let $f :X \rightarrow S $ be a smooth projective morphism.

  \subsubsection{Notational conventions}\label{Sec:NotEt} Consider the $\Z_\ell$-\'etale local system   $\cV_{\Z_\ell}:=R^{2i}f_*\Z_\ell(i)$ on $S$. Let $G_\ell\subset \SGL(V_{\Q_\ell})$ denote the  Zariski-closure  of the image of $\pi_1(S )$ acting on $ V_{\Q_\ell}$; let also  $\overline{G}_\ell\subset  G_\ell$ and, for every $s\in S$,    $ G_{\ell,s}\subset G_\ell$ denote the Zariski closure of the images of $\pi_1 (S_{\bar k} )$ and $\pi_1(s )$ acting on $V_{\Q_\ell}$ by restriction along the functorial morphisms  $\pi_1(S_{\bar k} )\rightarrow \pi_1(S )$ and $\pi_1(s )\rightarrow \pi_1(S )$ respectively (in particular $G_{\ell,\eta}=G_\ell$).  As $S$ is geometrically connected over $k$,   the functorial sequence   $$1\rightarrow \pi_1(S_{\bar k}  )\rightarrow  \pi_1(S )\rightarrow \pi_1(k)\rightarrow 1$$ is exact, hence  $\overline{G}_\ell\subset G_\ell$ is a normal subgroup, and  for every closed point $s\in |S|$, one has $G_\ell^\circ=\overline{G}_\ell^\circ G_{\ell,s}^\circ$.

  \subsubsection{Specialization and extension of algebraically closed fields}\label{Sec:SpecializationEt} We recall the following two properties of  the cycle class map for \'etale $\Z_\ell$-cohomology. \\

 \paragraph{\textit{Compatibility with specialization  of  algebraic cycles}}\label{Sec:SpecializationEt2}  For every $s\in S$, one has a commutative diagram 
  $$\xymatrix{  \CH^i(X_{\bar k})\ar[r]^{|_{X_{\bar \eta}}}\ar[d]_{|_{X_{\bar s}}}&  \CH^i(X_{\bar\eta})\ar[d]^{c_{\ell,\eta}}\ar[dl]^{sp_{\eta,s}}  \\
  \CH^i(X_{\bar s})\ar[r]_{c_{\ell,s}}& V_{\Z_\ell}\\}$$
(see \cite[$\S$ 20.3, Ex. 20.3.1 and 20.3.5]{Fulton}).\\

 \paragraph{\textit{"Invariance" under extension of algebraically closed field}}\label{Sec:SpecializationEt1} Let  $\Omega  \hookrightarrow \Omega'$ be an extension of algebraically closed fields of characteristic $\not= \ell$ and let $Y$ be a smooth  proper  variety over $\Omega$. Consider the canonical commutative square
 $$\xymatrix{\CH^i(Y)\ar[d]_{|_{Y_{\Omega'}}}\ar[r]^{c_\ell}& \SH^{2i}(Y,\Z_\ell(i))\ar[d]^\simeq\\
 \CH^i(Y_{\Omega'})\ar[r]^{c_\ell}& \SH^{2i}(Y_{\Omega'},\Z_\ell(i)).}$$
 Then\footnote{In fact, a cycle $\xi\in \CH^i(Y_{\Omega'})$ is defined over a finitely generated algebraically closed field $\Omega''\subset  \Omega'$. One could then find a smooth and proper model of $Y$ over a small affine scheme $U$ over $\Omega$ with generic point $\Omega''$ and use the specialization at a $\Omega$-point of $U$, as in \ref{Sec:SpecializationEt2}.},
   $$\Sim[c_\ell\circ -|_{Y_{\Omega'}}]: \CH^i(Y) \rightarrow  \SH^{2i}(Y_{\Omega'},\Z_\ell(i))=\Sim[c_\ell: \CH^i(Y_{\Omega'}) \to  \SH^{2i}(Y_{\Omega'},\Z_\ell(i))].$$
 In particular, $V_{\Z_\ell,s}^{a}$, $V_{\Z_\ell,s}^{\free, a}$ \textit{etc.} are independent of the geometric point $\bar s$ over $s$.
\textit{}\\

 \subsubsection{The lattice $\Lambda_{\Z_\ell}$}\label{Sec:LatticeEt}For every $  s \in S$, define 
 $$\Lambda_{\Z_\ell, s}:=\Sim[\CH^i(X_{\bar k})_{\Z_\ell} \rightarrow \CH^i(X_{\bar s})_{\Z_\ell}\stackrel{c_{\ell,s}}{\rightarrow} V_{\Z_\ell}^{\free}]\subset  V_{\Z_\ell}^{\free}.$$
 \noindent By construction and \ref{Sec:SpecializationEt}, one has  $$\Lambda_{\Z_\ell, s}\subset V^{\free,a}_{\Z_\ell,\eta}\subset V^{\free,a}_{\Z_\ell,s}\subset V^{\free}_{\Z_\ell}.$$
  \begin{lemme}\label{Lem:LatticeEt}   The lattice $\Lambda_{\Z_\ell}:=\Lambda_{\Z_\ell, s}\subset V_{\Z_\ell}^{\free}$ is  independent of $s $  (\textit{modulo} the identifications $V_{\Z_\ell}=\cV_{\Z_\ell,\bar s }\simeq \cV_{\Z_\ell,\bar \eta}$).  \end{lemme}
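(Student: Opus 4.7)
The plan is to deduce this directly from the commutative diagram in Subsection \ref{Sec:SpecializationEt2}, which expresses compatibility of cycle class maps with specialization. Concretely, I would show that, under the identifications $\cV_{\Z_\ell,\bar s}\simeq \cV_{\Z_\ell,\bar\eta}=V_{\Z_\ell}$ fixed by the \'etale paths $\alpha_{\bar s}$, one has
$$c_{\ell,s}\circ(-|_{X_{\bar s}})=c_{\ell,\eta}\circ(-|_{X_{\bar\eta}}):\CH^i(X_{\bar k})\rightarrow V_{\Z_\ell}.$$
This is the outer square of the said diagram, obtained by composing the identity $|_{X_{\bar s}}=sp_{\eta,s}\circ(-|_{X_{\bar\eta}})$ (commutativity of the upper triangle) with $c_{\ell,s}\circ sp_{\eta,s}=c_{\ell,\eta}$ (commutativity of the lower triangle).

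Tensoring with $\Z_\ell$ and post-composing with the projection $V_{\Z_\ell}\twoheadrightarrow V_{\Z_\ell}^{\free}$, this equality of maps implies that the image
$$\Lambda_{\Z_\ell,s}=\Sim[\CH^i(X_{\bar k})_{\Z_\ell}\xrightarrow{-|_{X_{\bar s}}}\CH^i(X_{\bar s})_{\Z_\ell}\xrightarrow{c_{\ell,s}}V_{\Z_\ell}^{\free}]$$
equals
$$\Sim[\CH^i(X_{\bar k})_{\Z_\ell}\xrightarrow{-|_{X_{\bar\eta}}}\CH^i(X_{\bar\eta})_{\Z_\ell}\xrightarrow{c_{\ell,\eta}}V_{\Z_\ell}^{\free}],$$
and the latter manifestly does not depend on $s$.

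There is no real obstacle here: the content of the lemma is essentially packaged in the diagram of Subsection \ref{Sec:SpecializationEt2}. The only point to keep in mind is that the identification $\cV_{\Z_\ell,\bar s}\simeq \cV_{\Z_\ell,\bar\eta}$ is the one induced by the chosen \'etale path $\alpha_{\bar s}$, and that the equality $c_{\ell,s}\circ sp_{\eta,s}=c_{\ell,\eta}$ is formulated with respect to this identification -- both conventions are already fixed at the beginning of Subsection \ref{Sec:Et}. The passage from $V_{\Z_\ell}$ to $V_{\Z_\ell}^{\free}$ is harmless since torsion is functorially preserved.
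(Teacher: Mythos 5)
Your argument is correct, but it is not the route the paper takes. You deduce the $s$-independence from the specialization diagram of \ref{Sec:SpecializationEt2}: commutativity of the outer square gives $c_{\ell,s}\circ(-|_{X_{\bar s}})=c_{\ell,\eta}\circ(-|_{X_{\bar \eta}})$ as maps to $V_{\Z_\ell}$, so every $\Lambda_{\Z_\ell,s}$ coincides with the image of the $\eta$-route, which does not see $s$. The paper argues instead at the level of cohomology: the restriction $\SH^{2i}(X_{\bar k},\Z_\ell(i))\rightarrow V_{\Z_\ell}$ factors through the edge morphism of the Leray spectral sequence onto $\SH^0(S_{\bar k},R^{2i}f_*\Z_\ell(i))$ followed by evaluation at $\bar s$, and evaluation of a global section of a local system is independent of the base point modulo the path identifications. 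Both routes are legitimate; yours is the more elementary, and it is in fact already implicit in the paper's own remark that ``by construction and \ref{Sec:SpecializationEt}, one has $\Lambda_{\Z_\ell,s}\subset V^{\free,a}_{\Z_\ell,\eta}$''. What the Leray argument buys in addition is the explicit $s$-independent lattice $V_{\Z_\ell}^{\free}\cap (V_{\Q_\ell})^{\overline{G}_\ell}=\Sim[\SH^0(S_{\bar k},R^{2i}f_*\Z_\ell(i))\rightarrow V^{\free}_{\Z_\ell}]$ containing $\Lambda_{\Z_\ell}$, which parallels the singular-cohomology discussion in \ref{Sec:SingularBasics} and is reused in \ref{Sec:ProofPropConjComp}. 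What your argument costs is the compatibility between the chosen specialization map $sp_{\eta,s}$ on Chow groups and the chosen \'etale path $\alpha_{\bar s}$ --- a point you correctly flag, and which is exactly what \ref{Sec:SpecializationEt2} asserts, whereas the Leray route sidesteps it because a global section evaluates compatibly along any path.
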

\begin{proof} This follows from the fact that the restriction morphism $\SH^{2i}(X_{\bar k},\Z_\ell(i))\rightarrow \SH^{2i}(X_{\bar s},\Z_\ell(i))=V_{\Z_\ell}$
factors through the edge morphism  $\epsilon: \SH^{2i}(X_{\bar k},\Z_\ell(i))\twoheadrightarrow E_{\infty}^{0,i}\hookrightarrow E_2^{0,i}=\SH^0(S_\infty,R^{2i}f_*\Z_\ell(i))$ of  the Leray spectral sequence  for $f:X\rightarrow S$ as 
$$\xymatrix{\CH^i(X_{\bar k})_{\Z_\ell}\ar[rr]^{|_{X_{\bar s}}}\ar[d]_{c_{\ell }} &&\CH^i(X_{\bar s})_{\Z_\ell}\ar[d]^{c_{\ell,s}}\\
\SH^{2i}(X_{\bar k},\Z_\ell(i))\ar[r]^(.4){\epsilon}&\SH^0(S_{\bar k},R^{2i}f_*\Z_\ell(i))\ar[r]^(.6){(-)_{\bar s}}&V_{\Z_\ell}^{\free}}$$
and the fact the embedding  
 $$ V_{\Z_\ell}^{\free}\cap (V_{\Q_\ell})^{\overline{G}_\ell}=\Sim[\SH^0(S_{\bar k},R^{2i}f_*\Z_\ell(i))\stackrel{(-)_{ \bar s }}{\rightarrow}  V^{\free}_{\Z_\ell}]\subset V_{\Z_\ell}^{\free}$$
 is  independent of $s $  (\textit{modulo} the identifications $V_{\Z_\ell}=\cV_{\Z_\ell,\bar s }\simeq \cV_{\Z_\ell,\bar \eta}$).  
     \end{proof}
\begin{remarque}\label{Rem:LatticeEt}\textnormal{Assume\footnote{If $p=0$, this is always the case - see \cite{Nagata1}, \cite{Nagata2}, \cite{Hironaka}.} there exists a smooth compactification $X \hookrightarrow X ^{\cpt}$. Then the surjectivity of  the restriction morphism  $\CH^i(X_{\bar k}^{\cpt}) \twoheadrightarrow \CH^i(X_{\bar k})$ and the functoriality of cycle class maps 
shows that $\Lambda_{\Z_\ell}$ can also be described as
$$\Lambda_{\Z_\ell}=\Sim[\CH^i(X_{\bar k}^{\cpt})_{\Z_\ell}\stackrel{c_\ell}{\rightarrow} \SH^{2i}(X_{\bar k}^{\cpt},\Z_\ell(i))\rightarrow\SH^{2i}(X_{\bar s}^{\cpt},\Z_\ell(i))\twoheadrightarrow V^{\free}_{\Z_\ell}].$$
In particular, if $\bar k\hookrightarrow\Omega$ is an extension of algebraically closed fields and $s_\Omega$ a geometric point on $S_\Omega$ over $\bar s$, then \ref{Sec:SpecializationEt1}  shows that 
$$\Lambda_{\Z_\ell}=\Sim[\CH^i(X_{\Omega})_{\Z_\ell} \rightarrow \CH^i(X_{s_\Omega})_{\Z_\ell}\stackrel{c_{\ell,s_\Omega}}{\rightarrow} V_{\Z_\ell}^{\free}].$$}
\end{remarque}

 \subsection{Strategy for the proof of Theorem \ref{MainTh1} and Theorem \ref{MainTh2}}  We retain the notation and conventions of Subsection \ref{Sec:Statements} and Subsection \ref{Sec:NotEt}. For every $s\in S$, set 
  $$\Ob_{\Z_\ell,s}^{\free}:=|(C^{\free}_{\Z_\ell,s})_{\tors}|.$$
As
 $$\widetilde{\Ob}_{\Z_\ell,s}\leq |(V_{\Z_\ell})_{\tors}| \Ob^{\free}_{\Z_\ell,s}$$
and  as $(V_{\Z_\ell})_{\tors}$ is independent of $s\in S$ and, if\footnote{This follows from Artin's comparison - see Subsection \ref{Sec:ArtinComp} and the fact that singular cohomology groups are finitely generated. This is also true if $p>0$ \cite{GabberTorsion} but we will not resort to this fact.} $p=0$,  $(V_{\Z_\ell})_{\tors}=0$, $\ell\gg 0$ , it is enough to prove Theorem \ref{MainTh1}, Theorem \ref{MainTh2} for $ \Ob_{\Z_\ell,s}^{\free}$ instead of $\widetilde{\Ob}_{\Z_\ell,s}$.

  \subsubsection{$\cV_{\Q_\ell}$-generic points}\label{Sec:GenericPts}    The proofs of Theorem \ref{MainTh1} and Theorem \ref{MainTh2} are parallel and follow from the combination of two independent statements involving $\cV_{\Q_\ell}$-generic points.  Let   $\cV_{\Z_\ell}$ be a  $\Z_\ell$-local system on $S$. \\

  \paragraph{\textit{$\cV_{\Q_\ell}$-generic points}}  Define  the sets of   closed $\cV_{\Q_\ell}$-generic points  to be the subset $|S|_{\cV_{\Q_\ell}}^\gen \subset |S|$   of all $s\in |S|$ satisfying the following equivalent conditions $$
  G_{\ell,s}^\circ = G_\ell^\circ \Leftrightarrow 
  G_{\ell,s}^\circ \supset G_\ell^\circ  \Leftrightarrow 
 G_{\ell,s}^\circ \supset \overline{G}_\ell^\circ,$$
\noindent  and let $  |S|_{\cV_{\Q_\ell}}^{\ngen}:= |S|\setminus |S|_{\cV_{\Q_\ell}}^\gen\subset |S|$
be  the subset  of closed  non-$\cV_{\Q_\ell}$-generic  points.  Note that    $|S|_{\cV_{\Q_\ell,}}^\gen $ is contained in the set of all $s\in |S|$ such that $V_{\Q_\ell,s}^a\subset (V_{\Q_\ell })^{\overline{G}_\ell^\circ}$.\\

  \paragraph{\textit{Sparcity}}\label{Sec:GenericPtsSpar}  Under mild assumptions one expects non-$\cV_{\Q_\ell}$-generic points to be sparce - see  \cite{Bas} for details. 
  When $S$ is a curve, one has the following unconditional results. Let $\overline{\Pi}_\ell$ denote the image of $\pi_1(S_{\bar k})$ acting on $V_{\Q_\ell}$ and, if $p>0$, let $\overline{\Pi}^+_\ell(\supset \overline{\Pi}_\ell)$ denote the image of  $\pi_1(S_{k\bar \F_p})$ acting on $V_{\Q_\ell}$; these are $\ell$-adic Lie groups. One says that $\cV_{\Q_\ell}$ is:
  \begin{itemize}[leftmargin=*,parsep=0cm,itemsep=0.2cm,topsep=0.2cm,label=-]
  \item     GLP (geometrically Lie perfect) if Lie$(\overline{\Pi}_\ell)$ is a perfect Lie algebra \textit{viz} one has  $[\hbox{\rm Lie}(\overline{\Pi}_\ell),\hbox{\rm Lie}(\overline{\Pi}_\ell)]=0$;  
  \item and, if $p>0$, GLU (geometrically Lie unrelated) if $\hbox{\rm Lie}(\overline{\Pi}_\ell)$ and $\hbox{\rm Lie}(\overline{\Pi}_\ell^+)$ have no non-trivial common quotient.
  \end{itemize}

 \begin{factABC}\label{GenericPts:Spar1}\textit{}\textnormal{(\cite[Thm. 1]{UOI2}).} Assume $p=0$, $S$ is a curve and $\cV_{\Q_\ell}$ is GLP. Then for every integer $d\geq 1$,  the set $|S|_{\cV_{\Q_\ell}}^\ngen\cap |S|^{\leq d}$ is finite.
 \end{factABC}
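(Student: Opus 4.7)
The plan is to combine three ingredients: Zariski-closure arguments in $\ell$-adic algebraic groups to reduce the obstruction to finitely many ``bad'' proper subgroups; the construction of an auxiliary finite \'etale cover of $S$ for each such subgroup, using GLP to ensure the cover is nontrivial and hyperbolic; and Faltings' theorem, together with a Weil-restriction trick to handle bounded residue degree.

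First I would reduce to the case where $G_\ell$ is connected, by replacing $S$ with a connected finite \'etale cover trivializing $\pi_0(G_\ell)$; this multiplies $d$ by $|\pi_0(G_\ell)|$, which is harmless since the resulting map on closed points of bounded degree is surjective. After this reduction, $s\in |S|_{\cV_{\Q_\ell}}^{\ngen}$ iff $G_{\ell,s}$ is contained in a proper closed $\Q_\ell$-algebraic subgroup $H\subsetneq G_\ell$ with $H^\circ\not\supset \overline{G}_\ell^\circ$. The GLP hypothesis together with $p=0$ forces $\overline{G}_\ell^\circ$ to be semisimple (a connected reductive group with perfect Lie algebra is semisimple in characteristic zero), so by standard structural results on reductive groups there are, up to $G_\ell^\circ(\overline{\Q}_\ell)$-conjugacy, only finitely many maximal connected proper $\Q_\ell$-algebraic subgroups $M\subsetneq G_\ell^\circ$ that do not contain $\overline{G}_\ell^\circ$. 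Every non-generic $s$ has $G_{\ell,s}^\circ$ contained in a conjugate of some such $M$.

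For each such $M$, I would construct a connected finite \'etale cover $\varphi_M:S_M\to S$ whose closed points of residue degree $\leq d':=d\cdot [S_M:S]$ parametrize, up to $\pi_1(S)$-conjugacy, the non-generic $s$ governed by $M$. The key mechanism is Nori--Serre theory for compact $\ell$-adic Lie groups with perfect Lie algebra: under GLP the Frattini subgroup of $\overline{\Pi}_\ell$ is well-behaved, so the condition ``some $\pi_1(S)$-conjugate of $\rho(\pi_1(s))$ lies in $N_{G_\ell}(M)(\Q_\ell)\cap \rho(\pi_1(S))$'' is in fact detected on a \emph{finite} quotient of $\pi_1(S)$, giving rise to the desired cover $S_M\to S$. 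Without GLP this passage from an $\ell$-adic condition to a condition on a finite quotient breaks down, and one can only build inverse systems of covers that do not individually capture the obstruction.

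Finally, GLP also forces $S$ itself to be hyperbolic, since its geometric \'etale fundamental group must surject onto a non-abelian compact $\ell$-adic Lie group, ruling out $\PP^1$, $\G_m$, and elliptic curves; by Riemann--Hurwitz each $S_M$ is then a smooth geometrically connected hyperbolic curve over the finitely generated characteristic-zero field $k$. Faltings' theorem---combined with the standard trick of embedding the $d'$-th symmetric power $\mathrm{Sym}^{d'}(S_M)$ into $\mathrm{Jac}(S_M)$ and applying Faltings' subvariety theorem---then shows $|S_M|^{\leq d'}$ is finite, and summing over the finitely many $M$ yields the claim. \textbf{The main obstacle} is the construction of $S_M$: translating the Zariski-closure condition $G_{\ell,s}^\circ\subset M$ into an honest rational-point condition on a finite \'etale cover is a delicate $\ell$-adic Lie-theoretic argument, and is precisely where the GLP hypothesis is used in an essential way.
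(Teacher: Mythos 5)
This statement is not proved in the paper at all: it is quoted as Fact~\ref{GenericPts:Spar1} directly from \cite[Thm.~1]{UOI2} (Cadoret--Tamagawa, \emph{A uniform open image theorem for $\ell$-adic representations II}), a deep theorem whose proof occupies that entire article. Your sketch reproduces part of the correct outer architecture of that proof (reduction to connected monodromy, auxiliary finite \'etale covers, the $\mathrm{Sym}^{d'}(C)\hookrightarrow \mathrm{Jac}(C)$ trick and Faltings' subvariety theorem), but the central step is wrong as stated, and it is exactly the step you flag as ``the main obstacle''. The condition that $G_{\ell,s}^{\circ}$ be contained in (a conjugate of) a fixed proper algebraic subgroup $M$ is \emph{not} detected on a finite quotient of $\pi_1(S)$, with or without GLP. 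The subgroup $N_{G_\ell}(M)(\Q_\ell)\cap\rho(\pi_1(S))$ is closed and non-open in the $\ell$-adic Lie group $\rho(\pi_1(S))$, hence is an intersection of infinitely many open subgroups; the geometric object it determines is an infinite tower of finite \'etale covers $(S_{M,n})_{n\geq 1}$, not a single cover $S_M$. (Concretely, for $\overline{\Pi}_\ell$ open in $\mathrm{SL}_2(\Z_\ell)$, containment of $\rho(\pi_1(s))$ in a conjugate of a Borel or of the normalizer of a Cartan cannot be read off modulo $\ell^n$ for any fixed $n$.) The actual content of \cite{UOI2} is that the \emph{gonality} of the curves occurring in these towers tends to infinity with the level; this is where GLP enters, through a study of the abelianizations of open subgroups of $\overline{\Pi}_\ell$ and of the ramification of the compactified covers. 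Your appeal to ``Nori--Serre theory'' and the Frattini subgroup does not substitute for this: perfectness of $\mathrm{Lie}(\overline{\Pi}_\ell)$ controls finite \emph{abelian} quotients of open subgroups, not containment in closed non-open subgroups. (There is also a secondary issue: finitely many $G_\ell^\circ(\overline{\Q}_\ell)$-conjugacy classes of maximal $M$ does not immediately yield finitely many conditions up to $\rho(\pi_1(S))$-conjugacy over $\Q_\ell$.)

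The final step also has two genuine problems. First, Faltings/Mordell applies to the smooth \emph{compactification}: an \'etale cover of an affine hyperbolic curve can compactify to $\PP^1$ or to an elliptic curve, so ``each $S_M$ is hyperbolic by Riemann--Hurwitz'' does not give finiteness of its rational points, let alone of its points of degree $\leq d'$. Second, for points of degree $\leq d'$ one needs the gonality of the compactification to exceed $2d'$ (Frey's theorem, deduced from Faltings' theorem on subvarieties of abelian varieties applied to $\mathrm{Sym}^{d'}(C)$); genus $\geq 2$ is not enough. Both difficulties are resolved in \cite{UOI2} precisely by the ``gonality tends to infinity along the tower'' theorem --- that is, by the step your sketch collapses into a single finite cover. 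For the purposes of this paper the correct move is the one the authors make: cite the result; proving it is a separate and substantially harder project.
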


  \begin{factABC}\label{GenericPts:Spar2}\textit{}\textnormal{(\cite{TamagawaUOI}; see also   the discussion in  \cite[1.7.1]{Emiliano}).} Assume $p>0$, $S$ is a curve and $\cV_{\Q_\ell}$ is GLU. Then    the set $|S|_{\cV_{\Q_\ell}}^\ngen\cap  S(k)$ is finite.
 \end{factABC}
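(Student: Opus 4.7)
I would adapt the Serre--Cadoret--Tamagawa strategy used for Fact~\ref{GenericPts:Spar1}, with the GLU hypothesis playing the role of GLP in positive characteristic. After replacing $S$ by a suitable connected finite \'etale cover -- which changes $|S|_{\cV_{\Q_\ell}}^\ngen \cap S(k)$ only by a controlled finite set -- I may assume that $G_\ell$, $\overline{G}_\ell$, and the Zariski closure of $\overline{\Pi}_\ell^+$ are all connected. A $k$-rational point $s$ is then non-generic exactly when the Zariski closure $H_s$ of the image of $\pi_1(s) \to G_\ell(\Q_\ell)$ is contained in some proper connected algebraic subgroup $H \subsetneq G_\ell$ that fails to contain $\overline{G}_\ell^\circ$. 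A Jordan-type argument inside the reductive group $G_\ell / R_u(G_\ell)$ shows that, up to conjugation, only finitely many maximal such $H$ need to be considered.

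For each such $H$, the next step is to translate membership in the exceptional locus $S_H := \{ s \in |S| \,:\, G_{\ell,s}^\circ \subset H\}$ into a section problem for a tower of finite \'etale covers. By a standard Tannakian construction, $H$ is the stabilizer of a line $L \subset T(V_{\Q_\ell})$ in some mixed tensor construction; the lattice $V_{\Z_\ell}$ together with $L$ gives rise, by reducing mod $\ell^n$, to a tower of connected finite \'etale covers $(S_n \to S)_{n\geq 1}$ such that every $s \in S_H \cap S(k)$ lifts to a $k$-point of $S_n$ for all $n \gg 0$. It therefore suffices to prove that each such tower admits only finitely many compatibly liftable $k$-points.

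The decisive input is the GLU hypothesis. One uses it to show that the geometric monodromy of $S_n \to S$ remains genuinely geometric and is not absorbed into the arithmetic enlargement coming from $\pi_1(\Spec(k\bar\F_p))$; in particular, the cover $S_n$ cannot become isotrivial over $k\bar\F_p$ for all $n$. This unboundedness of genuinely geometric monodromy is the precise hypothesis that feeds into Tamagawa's finiteness theorem (\cite{TamagawaUOI}), which then bounds the number of $k$-rational points of $S$ admitting compatible $k$-rational lifts to $S_n$ for all $n$. Summing over the finitely many conjugacy classes of maximal exceptional $H$ yields the claim.

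\textbf{Main obstacle.} The crux is the GLU-driven step controlling the monodromy of the covers $S_n$. In characteristic zero the analogous step in Fact~\ref{GenericPts:Spar1} rests ultimately on Riemann--Hurwitz applied to growing geometric monodromy; in characteristic $p$, Frobenius contributes non-trivially to monodromy and can, \emph{a priori}, mimic geometric monodromy in arbitrarily large covers. The GLU condition is designed precisely to rule this out by forcing the Lie algebras of $\overline{\Pi}_\ell$ and $\overline{\Pi}_\ell^+$ to have no common quotient, but extracting a concrete finiteness statement from this -- via Tamagawa's delicate specialization argument for $\ell$-adic representations over function fields of positive characteristic -- is the genuinely hard content of the theorem and has no direct analogue in the characteristic zero proof.
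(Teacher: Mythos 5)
This statement is not proved in the paper: Fact~\ref{GenericPts:Spar2} is quoted verbatim as an external result from \cite{TamagawaUOI} (a paper in preparation), with a pointer to the discussion in \cite[1.7.1]{Emiliano}. There is therefore no internal argument to compare your proposal against, and the relevant question is whether your sketch constitutes an independent proof. It does not: the decisive step in your outline --- bounding the number of $k$-rational points of $S$ that lift compatibly to every layer of the tower $(S_n\to S)$ --- is exactly where you invoke ``Tamagawa's finiteness theorem (\cite{TamagawaUOI})'', i.e.\ the same black box the paper cites for the entire statement. As you yourself acknowledge in your ``main obstacle'' paragraph, the genuinely hard content (ruling out Frobenius mimicking geometric monodromy in arbitrarily large covers, via the GLU hypothesis) is precisely what that reference supplies. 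So the proposal is a plausible reduction of the statement to itself, not a proof.

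Two further cautions on the reduction steps you do spell out. First, your claim that passing to a connected finite \'etale cover changes $|S|^{\ngen}_{\cV_{\Q_\ell}}\cap S(k)$ ``only by a controlled finite set'' is delicate in characteristic $p$: a $k$-point of $S$ lifts only to a closed point of $S'$ of residue degree possibly $>1$, and Fact~\ref{GenericPts:Spar2} (unlike its characteristic-zero counterpart Fact~\ref{GenericPts:Spar1}) controls only $S'(k)$, not $|S'|^{\leq d}$; so the cover-reduction cannot be iterated naively. Second, the characteristic-zero mechanism you allude to (growing genus via Riemann--Hurwitz feeding into Mordell/gonality bounds for points of bounded degree) genuinely fails over a finitely generated field of positive characteristic, where curves of large genus can still have infinitely many rational points after inseparable or isotrivial phenomena; this is why the GLU condition and Tamagawa's argument are needed, and why no amount of reorganizing the characteristic-zero proof will substitute for that input.
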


 \noindent The $\Z_\ell$-local system $\cV_\ell=R^{2i}f_*\Q_\ell(i)$ is GLP  \cite{Hodge2}, \cite{Weil2}. If $p>0$, it is not necessarily GLU  but still, it is \textit{e.g.} if $\overline{\Pi}_\ell$ is open in the derived subgroup of  the image of $\pi_1(S_{\bar k})$ acting on $V_{\Q_\ell}$   - see \cite[Rem. 1.7.1.4]{Emiliano} for details.

 \subsubsection{The main Lemmas}Fact \ref{GenericPts:Spar1}  immediately reduce  the proof   of Theorem \ref{MainTh1}   to the proof of: 
   
 \begin{lemABC}\label{MainLem1} Set $\cV_{\Z_\ell}:=R^{2i}f_*\Z_\ell(i)$. Assume $p=0$  and   $\hbox{\rm VSing}(f_\infty,i)$  holds for some (equivalently every) embedding $\infty:k\hookrightarrow \C$. Then,
  $$\Ob_{\Z_\ell}^{\free,\gen}:=\hbox{\rm sup}\lbrace \Ob^{\free}_{\Z_\ell,s}\;|\; s\in |S|^{\gen}_{\cV_{\Q_\ell}}\rbrace <+\infty, $$
and   $\Ob_{\Z_\ell}^{\free, \gen}=1$ for $\ell\gg 0$.
 \end{lemABC}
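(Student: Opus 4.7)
The plan is to bound $\Ob^{\free}_{\Z_\ell,s}$, for $s\in|S|^{\gen}_{\cV_{\Q_\ell}}$, by the index of $\Lambda_{\Z_\ell}$ in its saturation $\Lambda^{\mathrm{sat}}_{\Z_\ell}:=V^{\free}_{\Z_\ell}\cap(\Lambda_{\Z_\ell}\otimes_{\Z_\ell}\Q_\ell)$ inside $V^{\free}_{\Z_\ell}$; this index is finite and independent of $s$. Granting the identity
\[V^{\free,a}_{\Q_\ell,s}=\Lambda_{\Z_\ell}\otimes_{\Z_\ell}\Q_\ell\qquad\text{for }s\in|S|^{\gen}_{\cV_{\Q_\ell}},\]
one obtains $V^{\free,a}_{\Z_\ell,s}\subset\Lambda^{\mathrm{sat}}_{\Z_\ell}$; since $V^{\free}_{\Z_\ell}$ is torsion-free, $(V^{\free}_{\Z_\ell}/V^{\free,a}_{\Z_\ell,s})_{\tors}=\Lambda^{\mathrm{sat}}_{\Z_\ell}/V^{\free,a}_{\Z_\ell,s}$, which is a quotient of $\Lambda^{\mathrm{sat}}_{\Z_\ell}/\Lambda_{\Z_\ell}$ by the inclusion $\Lambda_{\Z_\ell}\subset V^{\free,a}_{\Z_\ell,s}$ recorded in \ref{Sec:SpecializationEt}. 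This yields the uniform bound $\Ob^{\free}_{\Z_\ell,s}\leq|\Lambda^{\mathrm{sat}}_{\Z_\ell}/\Lambda_{\Z_\ell}|$.

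To establish the displayed identity, I would transfer $\hbox{\rm VSing}(f_\infty,i)$ into $\hbox{\rm WVEt}_{\Q_\ell}(f,i)$ via Proposition \ref{Prop:VConjComp}, obtaining a stabilizing cover $S'\to S$ on which $\hbox{\rm WVEt}^0_{\Q_\ell}(f'',i)$ holds for every further cover $S''\to S'$. I would then choose $S''\to S'$ finite \'etale whose geometric monodromy equals $\overline{G}_\ell^\circ$, which is possible because $\pi_0(\overline{G}_\ell)$ is finite; then $\SH^0(S''_{\bar k},\cV_{\Q_\ell})=(V_{\Q_\ell})^{\overline{G}_\ell^\circ}$. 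Given $\alpha_s\in V^{\free,a}_{\Q_\ell,s}$, algebraicity on $X_{\bar s}$ places $\alpha_s\in\widetilde{V}_{\Q_\ell,s}=(V_{\Q_\ell})^{G^\circ_{\ell,s}}$, and the $\cV_{\Q_\ell}$-generic condition $\overline{G}_\ell^\circ\subset G^\circ_{\ell,s}$ then gives $\alpha_s\in(V_{\Q_\ell})^{\overline{G}_\ell^\circ}=\SH^0(S''_{\bar k},\cV_{\Q_\ell})$. Picking a closed point $s''\in|S''|$ over $s$, $\hbox{\rm WVEt}^0_{\Q_\ell}(f'',i)$ furnishes $\tilde\alpha\in\CH^i(X_{S''_{\bar k}})_{\Q_\ell}$ whose cycle class restricts to $\alpha_s$ on $X_{\bar s''}=X_{\bar s}$. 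A specialization argument combined with Remark \ref{Rem:LatticeEt} applied to $\Omega:=\overline{k(S'')}$ then identifies the image of $\CH^i(X_{S''_{\bar k}})_{\Q_\ell}\to V^{\free}_{\Q_\ell}$ with $\Lambda_{\Z_\ell}\otimes\Q_\ell$, so $\alpha_s\in\Lambda_{\Z_\ell}\otimes\Q_\ell$.

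For the sharper statement $\Ob^{\free,\gen}_{\Z_\ell}=1$ when $\ell\gg 0$, I would fix an embedding $k\hookrightarrow\C$ and $s_\infty\in S(\C)$ and invoke Artin's comparison isomorphism (available because $p=0$) to identify $V^{\free}_{\Z_\ell}\simeq\SH^{2i}(X_{s_\infty}^{\an},\Z(i))^{\free}\otimes_\Z\Z_\ell$ and $\Lambda_{\Z_\ell}\simeq\Lambda^{\mathrm{sing}}_\Z\otimes_\Z\Z_\ell$, where $\Lambda^{\mathrm{sing}}_\Z\subset\SH^{2i}(X_{s_\infty}^{\an},\Z(i))^{\free}$ is the image of the singular cycle class map. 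The index of $\Lambda^{\mathrm{sing}}_\Z$ in its $\Z$-saturation is a fixed integer $N$; for $\ell\nmid N$, the lattice $\Lambda_{\Z_\ell}$ is saturated in $V^{\free}_{\Z_\ell}$, forcing the bound above to vanish.

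The main obstacle I anticipate is the identification of $\Sim[\CH^i(X_{S''_{\bar k}})_{\Q_\ell}\to V^{\free}_{\Q_\ell}]$ with $\Lambda_{\Z_\ell}\otimes\Q_\ell$: this combines the specialization-invariance of the \'etale cycle class map (\ref{Sec:SpecializationEt2}, \ref{Sec:SpecializationEt1}) with the extension-of-algebraically-closed-field description of $\Lambda_{\Z_\ell}$ in Remark \ref{Rem:LatticeEt}, and relies on the existence of smooth compactifications, which is automatic in characteristic zero.
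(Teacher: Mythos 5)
Your proposal is correct and follows essentially the same route as the paper: your saturation bound is the paper's Lemma \ref{Lem:Spec}, the identity $V^{a}_{\Q_\ell,s}=\Lambda_{\Q_\ell}$ for $\cV_{\Q_\ell}$-generic $s$ is obtained exactly as in \ref{Sec:ProofLem2(i)} (via Proposition \ref{Prop:VConjComp} and the reduction to connected geometric monodromy of \ref{Sec:ConnectedReduction}), and the $\ell\gg 0$ statement via Artin's comparison is the argument of \ref{Sec:MainLem1Proof}. The one imprecision is your identification of $\Sim[\CH^i(X_{S''_{\bar k}})_{\Q_\ell}\rightarrow V_{\Q_\ell}]$ with $\Lambda_{\Q_\ell}$ for the \emph{original} family: after base change along $S''\rightarrow S$ the image of the Chow group of the total space may strictly grow (it is only constrained to lie in $(V_{\Q_\ell})^{\overline{G}_\ell^\circ}$ rather than in $(V_{\Q_\ell})^{\overline{G}_\ell}$), so one should instead take $T_{\Z_\ell}$ to be the lattice attached to $f'':X''\rightarrow S''$ --- still independent of the point by Lemma \ref{Lem:LatticeEt} and still comparable to a singular lattice --- which is exactly what the paper arranges by performing the reduction \ref{Sec:ConnectedReduction} once and for all at the outset.
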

 \noindent The proof of Lemma \ref{MainLem1} will be carried out  in Section \ref{Sec:MainLem1Proof}.\\

  \noindent Similarly, Fact \ref{GenericPts:Spar2}  immediately reduces the proof   of Theorem \ref{MainTh2}   to the proof of: 
 
  \begin{lemABC}\label{MainLem2}  Set $\cV_{\Z_\ell}:=R^{2i}f_*\Z_\ell(i)$. Assume $p>0$ and either   (i) $\hbox{\rm WVEt}_{\Q_\ell}(f,i)$  or 
   (ii) $\hbox{\rm VCrys}(f,i)+ \hbox{\rm CrysEt}_{\Q_\ell}(f,i)$ holds. Then,  $\Ob_{\Z_\ell}^{\free,\gen}  <+\infty$.
 \end{lemABC}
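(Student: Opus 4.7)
The strategy is to follow the blueprint of Lemma~\ref{MainLem1}, exploiting the lattice $\Lambda_{\Z_\ell}\subset V_{\Z_\ell}^{\free}$ of Lemma~\ref{Lem:LatticeEt}. The heart of the argument will be to show that at every $s\in |S|^{\gen}_{\cV_{\Q_\ell}}$ one has $V_{\Q_\ell,s}^a=\Lambda_{\Q_\ell}$, equivalently that $\Lambda_{\Z_\ell}\hookrightarrow V_{\Z_\ell,s}^{\free,a}$ has finite cokernel. Once this is in hand, the short exact sequence
$$0\to V_{\Z_\ell,s}^{\free,a}/\Lambda_{\Z_\ell}\to V_{\Z_\ell}^{\free}/\Lambda_{\Z_\ell}\to C_{\Z_\ell,s}^{\free}\to 0$$
exhibits $(C_{\Z_\ell,s}^{\free})_{\tors}$ as a quotient of the fixed finite group $(V_{\Z_\ell}^{\free}/\Lambda_{\Z_\ell})_{\tors}$, yielding the uniform bound $\Ob_{\Z_\ell}^{\free,\gen}\leq |(V_{\Z_\ell}^{\free}/\Lambda_{\Z_\ell})_{\tors}|$.

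The first step is a Galois-theoretic reduction. Invoking the stabilization built into the hypotheses (i) and (ii), I would pass to a finite \'etale cover of $S$ so as to assume $\overline{G}_\ell$ is connected; then $(V_{\Q_\ell})^{\overline{G}_\ell}=(V_{\Q_\ell})^{\overline{G}_\ell^\circ}=\SH^0(S_{\bar k},R^{2i}f_*\Q_\ell(i))$. For $s\in |S|^{\gen}_{\cV_{\Q_\ell}}$ the identity $G_{\ell,s}^\circ=G_\ell^\circ\supset \overline{G}_\ell^\circ=\overline{G}_\ell$ gives
$$V_{\Q_\ell,s}^a\subset \widetilde{V}_{\Q_\ell,s}=(V_{\Q_\ell})^{G_{\ell,s}^\circ}\subset (V_{\Q_\ell})^{\overline{G}_\ell}=\SH^0(S_{\bar k},R^{2i}f_*\Q_\ell(i)).$$
Under hypothesis (i), $\hbox{\rm WVEt}_{\Q_\ell}^0(f,i)$ applied on this cover immediately lifts every such class to $\CH^i(X_{\bar k})_{\Q_\ell}$, placing it in $\Lambda_{\Q_\ell}$ and yielding the required equality.

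Under hypothesis (ii) one has to route the argument through crystalline cohomology. After spreading out to a model $f:\mathcal{X}\to\mathcal{S}$ over $\mathscr{K}/F$, introduce the crystalline lattice $\Lambda_{\crys,\Q}=\Sim[\CH^i(\mathcal{X})_\Q\to \SH^0(\mathcal{S},R^{2i}f_{\crys,*}\mathcal{O}_{\mathcal{X}/W})_\Q]$ and run a parallel reduction: a crystalline analogue of the section argument places $V_{\crys,t}^a$ inside global crystalline sections, and $\hbox{\rm VCrys}^0(f,i)$ on the cover then gives $V_{\crys,t}^a\subset \Lambda_{\crys,\Q}$. Transfer to the $\ell$-adic side is effected by $\hbox{\rm CrysEt}_{\Q_\ell}(f,i)$ together with Fact~\ref{Emiliano}: the coincidence of $\ker c_{\crys}$ and $\ker c_\ell$ both on $\CH^i(\mathcal{X}_t)_\Q$ and (by the same mechanism) on $\CH^i(\mathcal{X})_\Q$ translates the $\Q$-dimensional identity $\dim_\Q V_{\crys,t}^a=\dim_\Q\Lambda_{\crys,\Q}$ into $\dim_{\Q_\ell}V_{\Q_\ell,s}^a=\dim_{\Q_\ell}\Lambda_{\Q_\ell}$, which combined with the tautological inclusion $\Lambda_{\Q_\ell}\subset V_{\Q_\ell,s}^a$ gives the equality. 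The main obstacle is precisely this last step: the $\cV_{\Q_\ell}$-generic hypothesis is intrinsically $\ell$-adic, so carrying it faithfully over to the crystalline side requires the compatibility of Frobenius actions at closed points of $\mathcal{S}$, together with a careful bookkeeping to ensure that the kernel identifications furnished by $\hbox{\rm CrysEt}_{\Q_\ell}(f,i)$ (via Fact~\ref{Emiliano}) are uniform enough to yield the desired dimension equalities at every generic fiber.
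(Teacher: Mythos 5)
Your overall framework --- reducing to the equality $V^a_{\Q_\ell,s}=\Lambda_{\Q_\ell}$ for $s\in|S|^{\gen}_{\cV_{\Q_\ell}}$ and then invoking the bound $\Ob^{\free}_{\Z_\ell,s}\leq|(V^{\free}_{\Z_\ell}/\Lambda_{\Z_\ell})_{\tors}|$ --- is exactly the paper's (Lemma \ref{Lem:Spec}), and your treatment of case (i) coincides with the paper's: after the connectedness reduction, $V^a_{\Q_\ell,s}\subset\widetilde V_{\Q_\ell,s}=\widetilde V_{\Q_\ell,\eta}\subset(V_{\Q_\ell})^{\overline G_\ell}$ and $\hbox{\rm WVEt}_{\Q_\ell}(f,i)$ concludes.

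For case (ii), however, the step you yourself flag as ``the main obstacle'' is a genuine gap, and your sketch of how to close it is off target. The point is neither a dimension count, nor a kernel comparison on $\CH^i(\cX)_\Q$ (which $\hbox{\rm CrysEt}_{\Q_\ell}(f,i)$ does not provide --- it is stated only for the closed fibres $\cX_t$), nor ``compatibility of Frobenius actions''. Moreover the inclusion $V^a_{\crys,t}\subset\Lambda_{\crys,\Q}$ you want to extract from $\hbox{\rm VCrys}^0(f,i)$ is both stronger than needed and unavailable: $\hbox{\rm VCrys}^0(f,i)$ only applies to classes already known to lie in $\SH^0(\cS,R^{2i}f_{\crys,*}\mathcal{O}_{\cX/W})_\Q$, and establishing that membership is precisely where the genericity of $s$ must enter. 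The paper's mechanism is the following chain: spread $k(s)$ out to a model $\mathscr S/F$ and a class $\widetilde\alpha_s\in\CH^i(X_s)_\Q$ out to $\widetilde\alpha_{\mathscr S}$; by the Leray edge map its crystalline class at a closed point $t\in|\mathscr S|$ lies in $\SH^0(\mathscr S,R^{2i}f_{\mathscr S,\crys,*}\mathcal{O}_{\cX_{\mathscr S}/K})$; genericity of $s$ (after the reductions of \ref{Sec:ConnectedReduction}) says exactly that the \emph{\'etale} restriction $\SH^0(\cS_{\bar F},R^{2i}f_*\Q_\ell(i))\rightarrow\SH^0(\mathscr S_{\bar F},R^{2i}f_*\Q_\ell(i))$ is an isomorphism, and Fact \ref{Emiliano} converts this into the isomorphy of the corresponding \emph{crystalline} restriction; hence $c_{\crys,t}(\widetilde\alpha_t)$ lies in $\SH^0(\cS,R^{2i}f_{\crys,*}\mathcal{O}_{\cX/W})_K$ and $\hbox{\rm VCrys}(f,i)$ produces $\widetilde\alpha_{\cX}\in\CH^i(\cX)_\Q$ with the same crystalline class at $t$. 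Only then does $\hbox{\rm CrysEt}_{\Q_\ell}(f,i)$, applied at the single fibre $\cX_t$, transfer the identity of classes back to $\ell$-adic cohomology, and the specialization diagram $sp_{s,t}$ carries it from $t$ back to $s$. Without this chain --- in particular without using Fact \ref{Emiliano} to move the genericity hypothesis, rather than the kernels, across to the crystalline side --- your argument for (ii) does not close.
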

 
  \noindent The proof of Lemma  Lemma \ref{MainLem2} will be carried out in Section \ref{Sec:LemmaBiiproof}.\\
 
 \noindent Note that Lemma \ref{MainLem1} and Lemma \ref{MainLem2} do not involve any restriction on the dimension of $S$ nor on the degree of the residue field $k(s)$ for $s\in  |S|^{\gen}_{\cV_{\Q_\ell}}$.

  \begin{remarque}\textnormal{ \textit{A priori}, the assumptions in   Lemma \ref{MainLem1}, Lemma \ref{MainLem2}  do not imply  $\hbox{\rm Tate}_{\Q_\ell}(X_s,i)$, $s\in |S|^{\gen}_{\cV_{\Q_\ell}}$.   However, if one assumes Tate$ _{\Q_\ell}(X_{s_0},i)$ holds for some $s_0\in |S|^{\gen}_{\cV_{\Q_\ell}}$ then these assumptions   indeed  imply  $\hbox{\rm Tate}_{\Q_\ell}(X_s,i)$, $s\in  |S|^{\gen}_{\cV_{\Q_\ell}}$. Indeed,   the proofs of  Lemma \ref{MainLem1}, Lemma \ref{MainLem2} will show these assumptions imply  $\Lambda_{\Q_\ell}= V_{\Q_\ell,s}^a$, $s\in  |S|^{\gen}_{\cV_{\Q_\ell}}$, where  $\Lambda_{\Q_\ell}=\Lambda_{\Z_\ell}\otimes_{\Z_\ell}\Q_\ell$. Assume furthermore Tate$ _{\Q_\ell}(X_{s_0},i)$ holds - that is $V_{\Q_\ell,s_0}^a=\widetilde{V}_{\Q_\ell,s_0}$, for some $s_0\in |S|^{\gen}_{\cV_{\Q_\ell}}$. But then,    for every $s\in  |S|^{\gen}_{\cV_{\Q_\ell}}$, one has 
  $$V_{\Q_\ell,s}^a=\Lambda_{\Q_\ell}=V_{\Q_\ell,s_0}^a= \widetilde{V}_{\Q_\ell,s_0} \stackrel{(\alpha)}{=}\widetilde{V}_{\Q_\ell,s},$$
  where $(\alpha)$ follows from   $s_0\in|S|^{\gen}_{\cV_{\Q_\ell}}$}.\end{remarque}

 \subsubsection{Reduction to connected  monodromy groups}\label{Sec:ConnectedReduction} To  bound $\Ob^{\free}_{\Z_\ell,s} $  uniformly for $s\in   |S|_{\cV_{\Q_\ell}}^\gen$, one can  freely replace $f:X\rightarrow S$ by a base change along a finite   cover $\pi: S'\rightarrow S$ of connected smooth varieties over $k$. Indeed, consider the base-change diagram 
$$\xymatrix{X'\ar[r]\ar@{}[dr]|{\square}\ar[d]_{f'}&X \ar[d]^f\\
S'\ar[r]&S}$$ 
and write  $ \cV'_{\Z_\ell}:=R^{2i}f'_*\Z_\ell(i)$.
For $s\in |S|$ and  $s'\in |S'| $ over $s\in |S|$, let $\bar s'$ be a geometric point over $s'$ and let $\bar s=\pi\circ \bar s'$ denote its image on $S$.    
Then,   $X'_{\bar{s}'}\tilde{\rightarrow} X_{\bar{s}}$ as $\bar{k}$-schemes hence, \textit{a fortiori}, $\CH^i(X'_{\bar s'})\tilde{\rightarrow} \CH^i(X_{\bar{s}}) $. On the other hand,  by proper base change, $\cV'_{\Z_\ell}=\pi^*\cV_{\Z_\ell}$ hence, one gets a canonical  commutative square
$$\xymatrix{\CH^i(X_{\bar s}) \ar[r]^{c_{ \ell,s}}&\SH^{2i}(X_{\bar s},\Z_\ell(i))\ar@{=}[d]\\
\CH^i(X'_{\bar s'})\ar[r]^{c_{\ell,s'}}\ar[u]^\simeq&\SH^{2i}(X'_{\bar s'},\Z_\ell(i))},$$
where the vertical arrows are isomorphisms and the right vertical one is equivariant with respect to the functorial morphism $\pi_1(S')\hookrightarrow \pi_1(S)$. In particular, as $\pi_1(S')\hookrightarrow \pi_1(S)$ is open, one has    $s\in |S|^{\gen}_{\cV_{\Q_\ell}}$ if and only if $s'\in |S'|^{\gen}_{\cV'_{\Q_\ell}}$. \\

\noindent After base change along a finite cover $S'\rightarrow S$ of smooth varieties (which, working componentwise, we may assume to be connected and, replacing $k$ by a finite field extension, geometrically connected over $k$), one may assume VSing$^0(f'_\infty,i)$ (resp. WVEt$^0_{\Q_\ell}(f',i)$, resp.  VCrys$^0(f',i)$) holds for every  base change along a finite cover $S_\infty'\rightarrow S_\infty$ (resp. $S'\rightarrow S$, resp. $\mathcal{S}'\rightarrow \mathcal{S}$) of smooth varieties. Then, 
 the assumptions and conclusions of Theorem \ref{MainTh1} and Theorem \ref{MainTh2} become unchanged by base change along  finite   covers   of smooth varieties, so that  one may  assume:

  \begin{enumerate}[leftmargin=*,parsep=0cm,itemsep=0.2cm,topsep=0.2cm,label=\alph*)]
  
  \item   the algebraic group  $\overline{G}_\ell$   is connected\footnote{\label{ConnectedGeo} First, after replacing $k$ by a finite field extension, one may assume $S(k)\not=\emptyset$, so that fixing $s\in S(k)$ yields a splitting $s:\pi_1(s)=\pi_1(k)\hookrightarrow \pi_1(S)$ of the canonical short exact sequence 
$$1\rightarrow \pi_1(S_{\bar k})\rightarrow \pi_1(S)\rightarrow \pi_1(k)\rightarrow 1$$
and a well-defined action by conjugacy of $\pi_1(k)$ on $\pi_1(S)$.   Then, let  $S'_{\bar k}\rightarrow S_{\bar k}$ denote the connected \'etale cover corresponding to $\ker(\pi_1(S_{\bar k})\rightarrow \pi_0(\overline{G}_\ell))$. As $\overline{G}_\ell^\circ$ is normal in $G_\ell$, the $\pi_1(k)$-action  stabilizes  $\pi_1(S'_{\bar k})$ hence  $s(\pi_1(k))\pi_1(S'_{\bar k})\subset \pi_1(S)$ is an open subgroup corresponding to a connected \'etale cover $S'\rightarrow S$ which, by construction, has the requested property. 
}; 
  \item  the algebraic groups  $G_{\ell,s}$, $s\in S$ are all connected\footnote{After base-change along  the connected \'etale cover     $S'\rightarrow S$ trivializing $\cV_\ell/\tilde{\ell}$ (with $\tilde{\ell}=4$ if $\ell=2$ and $\tilde{\ell}=\ell$ if $\ell\not=2$, this classically follows from the Cebotarev density theorem, using Frobenius tori.}.
  \end{enumerate}

  \subsubsection{An elementary lemma}\label{Sec:Specialization} Recall that for  every $s\in S$,   we identify
 $V_{\Z_\ell}:=\cV_{\Z_\ell,\bar s}\tilde{\rightarrow} \cV_{\Z_\ell,\bar \eta}$.
  For a subset $\Sigma\subset S$, set 
  $$V_{\Z_\ell,\Sigma}^{\free,a}:=\bigcap_{s\in \Sigma} V_{\Z_\ell,s}^{\free,a}\subset V_{\Z_\ell,s}^{\free,a}\subset V_{\Z_\ell}^{\free}.$$

 \begin{lemme}\label{Lem:Spec}  For every $\Z_\ell$-submodule $T_{\Z_\ell}\subset V_{\Z_\ell,\Sigma}^{\free,a}$ and for every $s\in \Sigma$, one has the following implications $$ T_{\Q_\ell}=V_{\Q_\ell,s}^a\Longleftrightarrow [V_{\Z_\ell, s}^{\free,a}:T_{\Z_\ell} ]<+\infty\Longrightarrow \Ob^{\free}_{\Z_\ell,s} \leq  c(T_{\Z_\ell}):=|(V_{\Z_\ell}^{\free}/T_{\Z_\ell})_{\tors}|.$$
 \end{lemme}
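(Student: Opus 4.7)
The plan is to treat the two implications separately and extract the bound from a short exact sequence plus torsion chasing; both steps are elementary once one spells out the modules involved.

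First I would handle the equivalence $T_{\Q_\ell}=V_{\Q_\ell,s}^a \Longleftrightarrow [V_{\Z_\ell,s}^{\free,a}:T_{\Z_\ell}]<+\infty$. The ambient $\Z_\ell$-module $V_{\Z_\ell}^{\free}$ is finitely generated and free, and both $T_{\Z_\ell}$ and $V_{\Z_\ell,s}^{\free,a}$ are $\Z_\ell$-submodules with $T_{\Z_\ell}\subset V_{\Z_\ell,s}^{\free,a}$. Tensoring with $\Q_\ell$ is exact, and the natural identification $V_{\Z_\ell,s}^{\free,a}\otimes_{\Z_\ell}\Q_\ell=V_{\Q_\ell,s}^a$ (the torsion of $V_{\Z_\ell,s}^a$ is killed, and its $\Q_\ell$-span inside $V_{\Q_\ell}$ is $V_{\Q_\ell,s}^a$) reduces the equivalence to the standard fact that an inclusion of finitely generated $\Z_\ell$-modules of the same $\Q_\ell$-rank inside a free ambient module has finite index, and conversely that a finite-index inclusion becomes an equality after tensoring with $\Q_\ell$.

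For the inequality, I would exploit the short exact sequence
$$0 \to V_{\Z_\ell,s}^{\free,a}/T_{\Z_\ell} \to V_{\Z_\ell}^{\free}/T_{\Z_\ell} \to C^{\free}_{\Z_\ell,s} \to 0,$$
whose leftmost term is finite by the equivalence just established, so in particular all torsion. The key observation is that the induced map on torsion parts
$$\bigl(V_{\Z_\ell}^{\free}/T_{\Z_\ell}\bigr)_{\tors}\twoheadrightarrow \bigl(C^{\free}_{\Z_\ell,s}\bigr)_{\tors}$$
is \emph{surjective}: given a torsion element $\bar x\in C^{\free}_{\Z_\ell,s}$ killed by $\ell^n$, any lift $\tilde x\in V_{\Z_\ell}^{\free}/T_{\Z_\ell}$ satisfies $\ell^n\tilde x\in V_{\Z_\ell,s}^{\free,a}/T_{\Z_\ell}$, and since the latter is finite, some further power of $\ell$ kills $\ell^n\tilde x$, showing that $\tilde x$ itself is torsion. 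Taking cardinalities yields
$$\Ob^{\free}_{\Z_\ell,s}=\bigl|\bigl(C^{\free}_{\Z_\ell,s}\bigr)_{\tors}\bigr|\leq \bigl|\bigl(V_{\Z_\ell}^{\free}/T_{\Z_\ell}\bigr)_{\tors}\bigr|=c(T_{\Z_\ell}),$$
which is the desired bound.

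There is no real obstacle here: the statement is a formal piece of linear algebra over $\Z_\ell$, and the only subtlety is the minor verification that passage to torsion is surjective in this particular short exact sequence, which follows from finiteness of the kernel. The lemma will be combined, in the later sections, with the construction of a suitable submodule $T_{\Z_\ell}\subset V_{\Z_\ell,\Sigma}^{\free,a}$ (to be extracted from the variational realization conjectures and the structure of $\cV_{\Q_\ell}$-generic points via Fact \ref{GenericPts:Spar1} and Fact \ref{GenericPts:Spar2}), where the real work of Theorem \ref{MainTh1} and Theorem \ref{MainTh2} lies.
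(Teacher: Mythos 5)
Your proof is correct and follows essentially the same route as the paper: the paper obtains your short exact sequence $0\to V_{\Z_\ell,s}^{\free,a}/T_{\Z_\ell}\to V_{\Z_\ell}^{\free}/T_{\Z_\ell}\to C^{\free}_{\Z_\ell,s}\to 0$ via the snake lemma applied to the two presentations of $V_{\Z_\ell}^{\free}$, and then uses exactly your observation that finiteness of the kernel forces a short exact sequence on torsion subgroups. The only cosmetic difference is that you verify the surjectivity on torsion by hand rather than citing it, which is a worthwhile check.
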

 \begin{proof} The first equivalence is straightforward. The second implication follows from the canonical commutative  diagram of short exact sequences 
\begin{equation}\label{Diagram:SpecHodge}\xymatrix{0\ar[r]&T_{\Z_\ell} \ar[r]\ar@{_{(}->}[d]&V_{\Z_\ell}^{\free}\ar[r]\ar@{=}[d]&V_{\Z_\ell}^{\free}/T_{\Z_\ell} \ar[r]\ar@{->>}[d]&0\\
 0\ar[r]&V_{\Z_\ell, s}^{\free,a}\ar[r] &V_{\Z_\ell}^{\free}\ar[r] &C^{\free}_{\Z_\ell,s}\ar[r]&0\\ } \end{equation}
 which, by the snake lemma, identifies 
$$Q_{\Z_\ell,s}:=\hbox{\rm coker}[T_{\Z_\ell} \hookrightarrow V_{\Z_\ell, s}^{\free,a}]\tilde{\rightarrow} \ker[V_{\Z_\ell}^{\free}/T_{\Z_\ell}   \twoheadrightarrow C^{\free}_{\Z_\ell, s}]=:K_{\Z_\ell, s}.$$
But if $K_{\Z_\ell,  s}$ is finite, one gets a short exact sequence 
$$0\rightarrow K_{\Z_\ell, s}\rightarrow  (V_{\Z_\ell}^{\free}/T_{\Z_\ell})_{\tors}\rightarrow (C^{\free}_{\Z_\ell, s})_{\tors}\rightarrow 0,$$
whence the assertion. \end{proof}

\noindent   Lemma \ref{Lem:Spec} reduces the proof of Lemma \ref{MainLem1} and Lemma \ref{MainLem2} to finding a $\Z_\ell$-submodule $T_{\Z_\ell}\subset V_{\Z_\ell,\Sigma}^{\free,a}$ such that $T_{\Q_\ell}=V_{\Q_\ell,s}^a$, $s\in \Sigma= |S|_{\cV_{\Q_\ell}}^{\gen}$ and,  in the setting of of Lemma \ref{MainLem1}, such that $c(T_{\Z_\ell})=0$, $\ell\gg 0$. In all cases, we will consider the $\Z_\ell$-submodule  
$ T_{\Z_\ell} :=\Lambda_{\Z_\ell}$ introduced in Subsection \ref{Sec:LatticeEt}, Lemma \ref{Lem:LatticeEt}.  As a warm-up, we end this Section with the proof of Lemma \ref{MainLem2}   (i).

  \subsubsection{Proof of Lemma \ref{MainLem2}   (i).}\label{Sec:ProofLem2(i)}  Let $s\in \Sigma= |S|_{\cV_{\Q_\ell}}^{\gen}$. Assuming WVEt$_{\Q_\ell}(f,i)$, we are   to prove that the inclusion  $ \Lambda_{\Q_\ell }\subset V_{\Q_\ell ,s}^{ a}$ is an equality.   This follows from the inclusions
 $$ V_{\Q_\ell,s}^a=V_{\Q_\ell,s}^a\cap  \widetilde{V}_{\Q_\ell,s} \stackrel{(\alpha)}{=} V_{\Q_\ell,s}^a\cap  \widetilde{V}_{\Q_\ell,\eta} \stackrel{(\beta)}{\subset} V_{\Q_\ell,s}^a\cap (V_{\Q_\ell})^{\overline{G}_\ell}\stackrel{(\gamma)}{=}  \Lambda_{\Q_\ell} \subset V_{\Q_\ell},$$
   where  $(\alpha)$ follows from $s\in |S|_{\cV_{\Q_\ell}}^{\gen}$,   $(\beta)$ from the reduction   \ref{Sec:ConnectedReduction} a), and   $(\gamma)$ is   WVEt$_{\Q_\ell}(f,i)$.

  \section{Comparison with singular and crystalline cohomologies}\label{Sec:Comp}

  \subsection{Singular cohomology}\label{Sec:Sing}

 \subsubsection{Singular $\Z$-local systems}\label{Sec:SingularBasics} Let $S_\infty$ be a connected variety smooth  over $\C$.   For every $ s_{0\infty}, s_\infty\in S_\infty(\C) =S_\infty^{\an}$, fix a topological path $s_{\infty} \rightarrow   s_{0\infty}$, inducing an isomorphism of fiber functors $\alpha_{s_\infty }: (-)_{s_{\infty} }\tilde{\rightarrow} (-)_{s_{0\infty}}$.  In particular, for every singular $\Z$-local system   $\cV_{ \Z}$  on $S_\infty^{\an}$, one identifies $\cV_{\infty,\Z ,s_{ \infty} }\tilde{\rightarrow} \cV_{\infty,\Z ,s_{0\infty}}$ equivariantly with respect to the isomorphism of  topological fundamental groups $\pi_1^{\topo}(S_\infty^{\an} ,s_{\infty})\tilde{\rightarrow} \pi_1^{\topo}(S_\infty^{\an},s_{0\infty})$, $\gamma\mapsto \alpha_{ s_{\infty} }\gamma \alpha_{s_{\infty} }^{-1}$. So that we will in general omit fiber functors from our notation and simply write
 $$V_{  \Z }:=\cV_{ \Z,s_{\infty}}\tilde{\rightarrow} \cV_{ \Z,s_{0\infty}}.$$
  \noindent Let $f_\infty:X_\infty\rightarrow S_\infty$ be a smooth projective morphism.  The singular $\Z$-local system   $\cV_\Z:=R^{2i}f_{\infty}^{\an}\Z(i)$ on $S_\infty^{\an }$ underlies a polarizable $\Z$-variation of Hodge structure. Let $G\subset \SGL(V_\Q)$ denote the generic Mumford-Tate group of $\cV_\Q:=\cV_\Z\otimes_\Z\Q$, and for every $s_\infty\in S_\infty(\C)$, let $G_{s_\infty}\subset G$ denote the Mumford-Tate group of the polarizable $\Q$-Hodge structure $ s_\infty^*\cV_\Q$. Let also $\overline{G}\subset  \SGL(V_\Q)$ denote the Zariski-closure of the image of $\pi_1^{\topo}(S_\infty^{\an} )$ acting on $V_\Q$. By the fixed part theorem, $\overline{G}^\circ$ a normal closed subgroup of $G$ and,   for every $s_\infty\in S_\infty(\C)$,  one has $G=\overline{G}^\circ G_{s_\infty}$.\\

 \noindent As in Subsection \ref{Sec:LatticeEt}, for every $s_\infty\in S_\infty(\C)$ set 
 $$\Lambda_{\Z,s_\infty}:=\Sim[\CH^i(X_\infty)\rightarrow \CH^i(X_{s_\infty})\stackrel{c_{ s_\infty}}{\rightarrow}V^{\free}_\Z]\subset V^{\free}_\Z.$$
 The same argument as in the proof of Lemma \ref{Lem:LatticeEt} (using Leray spectral sequence for singular cohomology) shows that $\Lambda_{\Z }:=\Lambda_{\Z,s_\infty}$ is independent of $s_\infty\in S_\infty(\C)$.

  \subsubsection{Artin's comparison}\label{Sec:ArtinComp} Assume $p=0$  and fix an embedding $\infty: k\hookrightarrow \C$.  Recall that  $(-)_\infty$ denotes the base-change functor along $\Spec(\C)\stackrel{\infty}{\rightarrow}\Spec(k)$ and $(-)^{\an}$   the analytification functor from varieties over $\C$ to complex analytic spaces.  Let $S$ be a geometrically connected, smooth variety over $k$.   For every $ s_\infty\in S_\infty(\C)$  over $s\in S$ let    $  k(\bar s)\subset \C$ denote the algebraic closure of $k(s)$ determined by $k(s)\hookrightarrow \C$ and let $\bar s$ denote the corresponding geometric point  over $s$.  
 Let $f :X \rightarrow S $ be a smooth projective morphism. The local systems $\cV_\Z:=R^{2i}f_{\infty}^{\an}\Z(i)$ on $S_\infty^{\an}$ and 
  $\cV_{\Z_\ell}:=R^{2i}f_{\infty}^{\an}\Z_\ell(i)$ on $S$ are related by Artin's comparison isomorphism 
   \cite[XI]{SGA4} 
	\begin{equation}\label{Artin}\mathcal{V}_{ \Z}\otimes_\Z\Z_\ell \tilde{\rightarrow} \mathcal{V}_{ \Z_\ell}^{\an},\end{equation}
	where we write $\mathcal{V}_{ \Z_\ell}^{\an}$ for the pull-back of $\mathcal{V}_{ \Z_\ell}$ along\footnote{More precisely, write  $ \mathcal{V}_{\Z_\ell}=\lim_n \mathcal{V}_{ \Z/\ell^n}$ 
	as a limit of $\Z/\ell^n$-local systems  and define the analytification of $ \mathcal{V}_{ \Z_\ell}$ as 
 $(\mathcal{V}_{ \Z_\ell})^\an :=\lim_n \mathcal{V}_{ \Z/\ell^n}|_{(X_\infty^{\an})_{\an}}$.}   the morphisms of sites
 $(X_\infty^{\an})_\an \to X_{\infty,\et} \to X_\et$. Equivalently, for every $s_\infty \in S_\infty(\C) $   over $s\in |S|$, one has a canonical isomorphism of $\Z_\ell$-modules 
 \begin{equation}\label{ArtinFiber} V_\Z\otimes_\Z\Z_\ell =\cV_{ \Z, s_\infty}\otimes_\Z\Z_\ell \tilde{\rightarrow} \cV_{ \Z_\ell,\bar{s}}= V_{ \Z_\ell},\;\; V_\Q\otimes_\Q\Q_\ell\tilde{\rightarrow} V_{\Q_\ell},\end{equation}  which is equivariant with respect to the profinite completion morphism composed with the GAGA isomorphism and the projection
$$\pi_1^\topo(S^{\an}_\infty ) \to \pi_1^\topo(S^{\an}_\infty  )^\wedge  \tilde{\rightarrow}  
\pi_1(S_\infty )  \tilde{\rightarrow} \pi_1(S_{\bar{k}}  )\hookrightarrow \pi_1(S ).$$
In particular,   $\overline{G} \subset \SGL(V_{ \Q})$  identifies, \textit{modulo} (\ref{ArtinFiber}), with the scalar extension $\overline{G}_{\Q_\ell} \subset \SGL(V_{\Q }\otimes_\Q\Q_\ell)$ of $\overline{G} \subset \SGL(V_{\Q })$.  \\

\noindent  Artin's comparison isomorphism  is compatible with cycle class maps on both sides. Namely, for every $ s_\infty\in S_\infty(\C)$  over $s\in S$  one has a canonical  commutative diagram 
 $$\xymatrix{\CH^i(X_{\bar k})\ar[r]^{|_{X_{\bar s}}}\ar[d]_{|_{X_\infty }} &\CH^i(X_{\bar s})\ar[r]^{c_{\ell,s}}\ar[d]_{|_{X_{ s_\infty}}}&V_{\Z_\ell}^{\free}\\ 
 \CH^i(X_{\infty} )\ar[r]_{|_{X_{ s_\infty}}} &\CH^i(X_{s_\infty})\ar[r]_{c_{ s_\infty}}& V_\Z^{\free}\ar@{^{(}->}[r]_{-\otimes_{\Z}\Z_\ell}\ar@{^{(}->}[u]& V_\Z^{\free}\otimes_{\Z}\Z_\ell.\ar[ul]_{(\ref{ArtinFiber})}^\simeq } 
$$
 
\noindent As a result, we will identify subgroups of $ V_\Z^{\free}$ (\textit{e.g.} $\Lambda_\Z$, $V_{\Z,s_\infty}^{\free,a}$ \textit{etc.}) with their image in $V_{\Z_\ell}^{\free}$. Set 
$$\Lambda_{\ell,\Z}:=\Sim[\CH^i(X_{\bar k}) \rightarrow  \CH^i(X_{\bar s}) \stackrel{c_{\ell,s}}{\rightarrow}V_{\Z_\ell}^{\free}]\subset V^{\free,a}_{\ell,\Z,s}:=\Sim[ \CH^i(X_{\bar s}) \stackrel{c_{\ell,s}}{\rightarrow}V_{\Z_\ell}^{\free}].$$
\noindent Then, from \ref{Sec:SpecializationEt1} and Remark \ref{Rem:LatticeEt} applied to $\bar k\hookrightarrow \C$, one has 
 $$\Lambda_\Z=\Lambda_{\ell,\Z},\;\; V^{\free, a}_{\Z,s_\infty}=V^{\free, a}_{\ell,\Z,s},$$
 hence \begin{equation}\label{comp}
 \Lambda_{\ell,\Z}\otimes_\Z\Z_\ell\tilde{\rightarrow}\Lambda_{\Z_\ell},\;\; V^{\free, a}_{\ell,\Z,s}\otimes_\Z\Z_\ell\tilde{\rightarrow} V_{\Z_\ell,s}^{\free, a}.
\end{equation}

 \subsubsection{Proof of Proposition \ref{Prop:VConjComp}}\label{Sec:ProofPropConjComp} 
For every $s\in S$,  write  $$\Lambda_{\ell,\Q} =\hbox{\rm im}[ \CH^i(X_{\bar k})_{\Q } \rightarrow  \CH^i(X_{\bar s})_{\Q }\stackrel{c_{\ell,s}}{\rightarrow}  V_{\Q_\ell} ]\subset V^a_{\ell,\Q,s}:=\hbox{\rm im}[ \CH^i(X_{\bar s})_{\Q }\stackrel{c_{ \ell,s}}{\rightarrow}   V_{\Q_\ell} ]\subset V_{\Q_\ell,s}^a,$$
$$\Lambda_{\Q_\ell}=\hbox{\rm im}[ \CH^i(X_{\bar k})_{\Q_\ell } \rightarrow \CH^i(X_{\bar s})_{\Q_\ell }\stackrel{c_{ \ell,s}}{\rightarrow}  V_{\Q_\ell} ].$$
If $p=0$, fix an embedding  $\infty:k\hookrightarrow \C$ and, for every $s_\infty\in S_\infty(\C)$, write 
 $$\Lambda_\Q = \hbox{\rm im}[\CH^i(X_{\infty})_{\Q } \rightarrow  \CH^i(X_{s_\infty})_{\Q }\stackrel{c_{ s_\infty} }{\rightarrow}  V_{\Q } ]\subset V_{\Q,s_\infty}^a.$$
\noindent  Recall from Subsection \ref{Sec:SingularBasics} and Subsection \ref{Sec:LatticeEt}  that $\Lambda_\Q$ is independent of $s_\infty$ and $\Lambda_{\ell,\Q} $, $\Lambda_{ \Q_\ell} $ are independent of $s$ (as the notation suggests) and, if $p=0$,   from Subsection \ref{Sec:ArtinComp}, that $\Lambda_{\ell,\Q}=\Lambda_\Q$. \\

 \noindent    With these notation, VSing$^0(f_\infty,i)$, VEt$^0_{\Q_\ell}(f,i)$ and  WVEt$^0_{\Q_\ell}(f,i)$ can be reformulated as 
 $$ \begin{tabular}[t]{lll}
VSing$^0(f_\infty,i)$&   $V_{\Q,s_\infty}^a\cap (V_\Q)^{\overline{G}}\subset \Lambda_\Q,$& $  s_\infty\in S_\infty$.  \\
 
VEt$^0_{\Q_\ell}(f,i)$&   $V_{\ell,\Q,s }^a\cap (V_{\Q_\ell})^{\overline{G}_\ell}\subset \Lambda_{\ell,\Q},$& $s\in |S|$.   \\
 
 WVEt$^0_{\Q_\ell}(f,i)$ &   $V_{\Q_\ell,s }^a\cap (V_{\Q_\ell})^{\overline{G}_\ell}\subset  \Lambda_{\Q_\ell},$& $s\in |S|$.\\
 \end{tabular}$$

 \noindent The implication $\hbox{\rm VEt}^0_{\Q_\ell}(f,i)\Rightarrow \hbox{\rm WVEt}^0_{\Q_\ell}(f,i)$ immediately follows from the fact  that, for every $s\in S$,  $V_{\Q_\ell,s}^a$ is the $\Q_\ell$-span of  $V^a_{\ell,\Q,s}$.\\
 
 \noindent  As    $\hbox{\rm Tate}_{\Q_\ell}(X_\eta,i)$ is   invariant under base-change along  finite covers $S'\rightarrow S$ of smooth varieties, to prove  $\hbox{\rm Tate}_{\Q_\ell}(X_\eta,i)\Rightarrow \hbox{\rm WVEt}_{\Q_\ell}(f,i)$ one may first perform such a base-change hence assume:

 \begin{itemize}[leftmargin=*,parsep=0cm,itemsep=0.2cm,topsep=0.2cm ]
 \item $ V_{\Q_\ell,\eta}^a=\Sim[\CH^i(X_{\eta})_{\Q_\ell}\rightarrow\CH^i(X_{\bar \eta})_{\Q_\ell}\stackrel{c_{\ell,\eta}}{\rightarrow} V_{\Q_\ell}]$, which, from the surjectivity of the restriction map $\CH^i(X ) \twoheadrightarrow \CH^i(X_\eta)$, implies  $\Lambda_{\Q_\ell}=V_{\Q_\ell,\eta}^a$;
\item  $\overline{G}_\ell$ is connected - see Footnote \ref{ConnectedGeo}, which ensures
$  V_{\Q_\ell,s}^a\cap (V_{\Q_\ell})^{\overline{G}_\ell}\subset    \widetilde{V}_{\Q_\ell,\eta }  \stackrel{(\alpha)}{=}V_{\Q_\ell,\eta}^a=\Lambda_{\Q_\ell}$,
 where $(\alpha)$   is  $\hbox{\rm Tate}_{\Q_\ell}(X_\eta,i)$.\\
 
 \end{itemize}

    \noindent If $p=0$,  for every  $s_\infty\in S_\infty(\C)$ above  $s\in |S|$,  Artin's comparison isomorphism  yields the following canonical commutative diagram:
    
\begin{equation}\label{Pfuhh}\xymatrix{V_{\Q,s_\infty}^a\cap (V_\Q)^{\overline{G}}\ar[r]^\simeq \ar@{_{(}->}[d]&V_{\ell,\Q,s }^a\cap (V_{\Q_\ell})^{\overline{G}_\ell}\ar@{_{(}->}[d]\\
    \Lambda_\Q\ar[r]_\simeq & \Lambda_{\ell,\Q},}\end{equation}
    which shows   $\hbox{\rm VSing}^0 (f_\infty,i)\Leftrightarrow \hbox{\rm VEt}^0_{\Q_\ell}(f,i)$,  and the isomorphisms   $$(V_{\ell,\Q,s }^a\cap (V_{\Q_\ell})^{\overline{G}_\ell})\otimes_\Q\Q_\ell=V_{\Q_\ell,s }^a\cap (V_{\Q_\ell})^{\overline{G}_\ell},\;\;  \Lambda_{\ell,\Q}\otimes_\Q\Q_\ell=\Lambda_{\Q_\ell},$$ (similar to (\ref{comp})), 
which, together with (\ref{Pfuhh}), show  $\hbox{\rm WVEt}^0_{\Q_\ell}(f,i)\Rightarrow \hbox{\rm VEt}^0_{\Q_\ell}(f,i)$.

 \subsubsection{Proof of  Lemma \ref{MainLem1}}\label{Sec:MainLem1Proof}   As we already  observed that VSing$(f_\infty,i)$ $\Leftrightarrow$  WVEt$_{\Q_\ell}(f,i)$ and WVEt$_{\Q_\ell}(f,i)$ $\Rightarrow$ $\Lambda_{\Q_\ell}=V_{\Q_\ell,s}^a$, $s\in |S|^{\gen}_{\cV_{\Q_\ell}}$ - see Subsection \ref{Sec:ProofLem2(i)}, it only remains to prove that $ c(\Lambda_{\Z_\ell})=0$ for $\ell\gg 0$. This follows at once from  Artin's comparison isomorphism, which  yields the identifications 
 $$  (V_{\Z_\ell }^{\free} /\Lambda_{\Z_\ell })_{\tors} \simeq (V_{\Z }^{ \free} /\Lambda_\Z)_{\tors}\otimes_\Z\Z_\ell.$$
and the fact that  $ (V_{\Z}^{ \free} /\Lambda_\Z)_{\tors}$ is a finite group.

 \subsubsection{Obstruction to the integral Hodge conjecture}\label{Sec:ObstructionIHC} In this subsection, we deduce from Artin's comparison and Theorem \ref{MainTh1} uniform bounds for the obstruction to the integral Hodge conjecture. \\

\noindent   Let $X_\infty$ be a smooth, projective variety over $\C$.
The cycle class map   $$c  :\CH^i(X_\infty) \rightarrow  V_{\Z }:=\SH^{2i}(X_\infty^{\an},\Z(i))$$
for $\Z $-singular cohomology fits into a canonical diagram analogue to (\ref{Diagram:Basic})

$$\xymatrix{\CH^i(X_\infty) \ar[r]\ar@/^2pc/[rrr]^{c} \ar@{->>}[r]\ar[dd]&V_{\Z }^a\ar@{^{(}->}[r]\ar@{->>}[d]&\widetilde{V}_{\Z }\ar@{^{(}->}[r]\ar@{->>}[d]\ar@{}[dr]|\square&V_{\Z }\ar@{->>}[d]\\
 &V_{\Z }^{\free,a}\ar@{^{(}->}[r]\ar@{^{(}->}[d]&\widetilde{V}^{\free}_{\Z }\ar@{^{(}->}[r]\ar@{^{(}->}[d]\ar@{}[dr]|\square&V_{\Z }^{\free}\ar@{^{(}->}[d]\\
\CH^i(X_\infty)_{\Q }\ar@{->>}[r]&V_{\Q }^a\ar@{^{(}->}[r]&\widetilde{V}_{\Q }\ar@{^{(}->}[r]&V_{\Q },}$$
where, writing $G \subset \SGL(V_{\Q })$ for the Mumford-Tate group of the polarizable $\Q$-Hodge structure   $V_{\Q } $ underlies,  $$\widetilde{V}_{\Q }:=(V_{\Q })^{G }$$ is the  $\Q $-vector space of Hodge classes.   The (classical) rational $\Q$-Hodge conjecture in  codimension $i$  for  $X$ \cite{Hodge} 

 $$ \hbox{\rm Hodge}_{\Q}(X_\infty,i) \;\; 
   V_\Q ^a =\widetilde{V}_{\Q }$$
also admits  integral  variants:

 $$\begin{tabular}[t]{lll}
$ \hbox{\rm Hodge}^{\free}_{\Z}(X_\infty,i)$&
   $V_{\Z_\ell}^{\free,a} =\widetilde{V}^{\free}_{\Z}$& (Integral Hodge conjecture modulo torsion);\\
 $\hbox{\rm Hodge}_{\Z}(X_\infty,i)$& 
$V_{\Z}^a =\widetilde{V}_{\Z}$&  (Integral Hodge conjecture).
\end{tabular}$$
\noindent Again, the implications 
 $$\hbox{\rm Hodge}_{\Z}(X_\infty,i)\Rightarrow  \hbox{\rm Hodge}^{\free}_{\Z }(X_\infty,i)\Rightarrow  \hbox{\rm Hodge}_{\Q}(X_\infty,i)$$
 are tautological and, in general, the converse implications are known to fail (see e.g. \cite{AH, G} for examples of the failure of $\ \hbox{\rm Hodge}_{\Q}(X_\infty,i)$ and \cite{Kollar, K1} for  examples of the failure of $\hbox{\rm Hodge}^{\free}_{\Z }(X_\infty,i)$).
  By definition, the obstructions to  $ \hbox{\rm Hodge}_{\Q}(X_\infty,i)$,  $ \hbox{\rm Hodge}^{\free}_{\Z}(X_\infty,i)$, $ \hbox{\rm Hodge}_{\Z }(X_\infty,i)$ are, respectively:

$$\widetilde{C}_{\Q } := \widetilde{V}_{\Q }/V_{\Q }^a,\;\;  \widetilde{C}^{\free}_{\Z } := \widetilde{V}^{\free}_{\Z }/V_{\Z }^{\free,a},\;\;  \widetilde{C}_{\Z }:= \widetilde{V}_{\Z }/V_{\Z }^a,$$
with the properties  that one has the short exact sequence  
\begin{equation}\label{Eq:ObstructionHodge1}0\rightarrow (V_{\Z})_{\tors}/(V_{\Z}^a)_{\tors}  \rightarrow\widetilde{C}_{\Z } \rightarrow  \widetilde{C}^{\free}_{\Z }\rightarrow 0\end{equation}
 and that  $$\hbox{\rm Hodge}_{\Q } \Leftrightarrow (\widetilde{C}^{\free}_{\Z})_{\tors}= \widetilde{C}^{\free}_{\Z} \Leftrightarrow  (\widetilde{C}_{\Z})_{\tors}= \widetilde{C}_{\Z}$$
in which case, (\ref{Eq:ObstructionHodge1}) reads 
$$0\rightarrow (V_{\Z })_{\tors}/(V_{\Z}^a)_{\tors}  \rightarrow(\widetilde{C}_{\Z})_{\tors} \rightarrow  (\widetilde{C}^{\free}_{\Z})_{\tors}\rightarrow 0.$$
 Furthermore,
 $$(\widetilde{C}^{\free}_{\Z})_{\tors}=(C^{\free}_{\Z})_{\tors}:=V^{\free}_{\Z }/V^{\free,a}_{\Z }.$$

\noindent  Assume $p=0$ and fix an embedding $\infty: k\hookrightarrow \C$.  Let  $X$ be a smooth projective variety over $k$.  From the observations in Subsection \ref{Sec:ArtinComp} and   the flatness of $\Z\hookrightarrow \Z_\ell$, Artin's comparison isomorphism   induces the following identifications 
$$  ((V_{\Z})_{\tors}/(V_{\Z}^a)_{\tors})\otimes_\Z\Z_\ell\tilde{\rightarrow} (V_{\Z_\ell})_{\tors}/(V_{\Z_\ell}^a)_{\tors},\;\;  (C_\Z^{\free})_{\tors}\otimes_\Z\Z_\ell\tilde{\rightarrow}(C_{\Z_\ell}^{\free})_{\tors}. $$
As $V_\Z$ is a $\Z$-module of finite type, this shows, in particular,

\begin{enumerate}[leftmargin=*,parsep=0cm,itemsep=0.2cm,topsep=0.2cm,label=\alph*)]
\item $(\widetilde{C}_{\Z_\ell}^{\free})_{\tors} =0$ - hence $(C_{\Z_\ell}^{\free})_{\tors} =0$, for $\ell\gg 0$.
 \item The obstruction $(C_\Z^{\free})_{\tors}$ to Hodge$_\Z^{\free}(X_\infty,i)$ can be recovered from the obstructions $(C_{\Z_\ell}^{\free})_{\tors}$ to Tate$_{\Z_\ell}^{\free}(X,i)$, when $\ell$ varies as 
 $$(C_\Z^{\free})_{\tors}=\oplus_\ell (C_{\Z_\ell}^{\free})_{\tors}.$$
 \end{enumerate}
 
 \noindent As in Subsection \ref{Sec:Statements}, let  now $S$ be a smooth, geometrically connected variety over $k$  and $f:X\rightarrow S$ a smooth projective morphism.   For $s_\infty\in S_\infty(\C)$ above $s\in S$, denote by a subscript $(-)_{s_\infty}$ the various modules attached to $X_{s_\infty}=X_{\infty, s_\infty}$ introduced above (\textit{e.g.} $V_{\Z,s_\infty}:=\SH^{2i}(X^{\an}_{s_\infty},\Z(i))$,  $V^a_{\Z ,s_\infty}:=\hbox{\rm im}[\CH^i(X_{s_\infty })\rightarrow V_{\Z}]$ \textit{etc.}).   Again, one may  investigate how 
 $$\widetilde{\Ob}_{\Z,s}:=|(\widetilde{C}_{\Z,s_\infty})_{\tors}|$$
 vary with $s\in |S|$. A direct consequence of Theorem \ref{MainTh1} and the observations a), b) above is the following. 
 
 \begin{corollaire}\label{Cor:ObstructionIntegralHC} Assume $S$ is a curve and  $ \hbox{\rm VSing}(f_\infty,i)$ holds. Then, for every integer $d\geq 1$, one has 
 $$\widetilde{\Ob}_{\Z}^{\leq d}:=\hbox{\rm sup}\lbrace \widetilde{\Ob}_{\Z,s_\infty}\;|\; s\in |S|^{\leq d}\rbrace <+\infty. $$ 
 \end{corollaire}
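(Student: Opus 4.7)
The plan is to deduce the corollary from Theorem \ref{MainTh1} by aggregating over all primes $\ell$ via Artin's comparison. The key point is that the integral Hodge obstruction decomposes into its $\ell$-primary parts, each of which is controlled by the integral Tate obstruction at $\ell$, and only finitely many primes contribute non-trivially to bounded-degree fibers.

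More precisely, I would first argue, fiber by fiber, the analogue of observations (a) and (b) of Subsection \ref{Sec:ObstructionIHC}. For each $s_\infty \in S_\infty(\C)$ above $s\in |S|$, Artin's comparison isomorphism (\ref{ArtinFiber}) applied to $X_{s_\infty}$ and the compatibility with cycle class maps recalled in Subsection \ref{Sec:ArtinComp}, combined with the flatness of $\Z \hookrightarrow \Z_\ell$, give canonical isomorphisms
$$((V_{\Z,s_\infty})_{\tors}/(V_{\Z,s_\infty}^a)_{\tors})\otimes_\Z\Z_\ell \;\tilde{\to}\; (V_{\Z_\ell,s})_{\tors}/(V_{\Z_\ell,s}^a)_{\tors},\qquad (C^{\free}_{\Z,s_\infty})_{\tors}\otimes_\Z\Z_\ell \;\tilde{\to}\; (C^{\free}_{\Z_\ell,s})_{\tors}.$$
Since $V_{\Z,s_\infty}$ is finitely generated over $\Z$, the finite abelian group $(\widetilde{C}_{\Z,s_\infty})_{\tors}$ therefore decomposes as $\bigoplus_\ell (\widetilde{C}_{\Z_\ell,s})_{\tors}$, yielding the multiplicative identity
$$\widetilde{\Ob}_{\Z,s_\infty} \;=\; \prod_\ell \widetilde{\Ob}_{\Z_\ell,s}.$$

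Second, I would invoke Theorem \ref{MainTh1} prime by prime. Note that the hypothesis VSing$(f_\infty,i)$ is intrinsic to the variation of Hodge structure and is in particular $\ell$-independent, so it feeds into Theorem \ref{MainTh1} simultaneously for every prime $\ell$. Hence for every integer $d\geq 1$ and every prime $\ell$, one has $\widetilde{\Ob}_{\Z_\ell}^{\leq d} < +\infty$, and there exists $L = L(d)$ such that $\widetilde{\Ob}_{\Z_\ell}^{\leq d} = 1$ for every $\ell > L(d)$.

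Combining the two, for every $s \in |S|^{\leq d}$ and every $s_\infty \in S_\infty(\C)$ above $s$, the product above is supported on the finite set of primes $\ell \leq L(d)$, so
$$\widetilde{\Ob}_{\Z,s_\infty} \;=\; \prod_{\ell \leq L(d)} \widetilde{\Ob}_{\Z_\ell,s} \;\leq\; \prod_{\ell \leq L(d)} \widetilde{\Ob}_{\Z_\ell}^{\leq d},$$
a finite product of finite numbers that is independent of $s$. Taking the supremum over $s\in |S|^{\leq d}$ yields $\widetilde{\Ob}_{\Z}^{\leq d} < +\infty$. There is essentially no hard step here beyond the groundwork already laid: the only thing to check carefully is that the fiberwise version of Artin's comparison indeed matches $V_{\Z,s_\infty}^{a}$ with $V_{\Z_\ell,s}^{a}$ and $\widetilde{V}_{\Z,s_\infty}$ with $\widetilde{V}_{\Z_\ell,s}$ compatibly with the $\Z/\Z_\ell$-lattice structures, which is precisely the content of the diagram in Subsection \ref{Sec:ArtinComp} applied at $s_\infty$ (rather than at a global $\bar k$-point) combined with the fact that Mumford--Tate and monodromy subgroups match under (\ref{ArtinFiber}).
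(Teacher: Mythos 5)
Your proof is correct and follows exactly the route the paper intends: the paper derives this corollary as "a direct consequence of Theorem \ref{MainTh1} and the observations a), b)" of Subsection \ref{Sec:ObstructionIHC}, i.e., the fiberwise Artin-comparison decomposition $\widetilde{\Ob}_{\Z,s_\infty}=\prod_\ell\widetilde{\Ob}_{\Z_\ell,s}$ together with the uniform bound and the vanishing for $\ell>L(d)$ from Theorem \ref{MainTh1}. You have merely spelled out the details (in particular the $\ell$-independence of the hypothesis VSing$(f_\infty,i)$ and the uniformity of the threshold $L(d)$), which matches the paper's argument.
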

 
 \noindent When $i=2$, $(\widetilde{C}_{\Z,s_\infty})_{\tors}$ can again be  described in terms of degree $3$ unramified cohomology. More precisely, set $C_{\Z,s_\infty}:=V_{\Z_\ell}/V_{\Z,s_\infty}^a$. From the short exact sequence 
 $$0\rightarrow  \widetilde{C}_{\Z,s_\infty}\rightarrow C_{\Z,s_\infty} \rightarrow V_{\Z,s_\infty}/\widetilde{V}_{\Z,s_\infty}\rightarrow 0$$
and the fact that $ V_{\Z,s_\infty}/\widetilde{V}_{\Z,s_\infty}$ is torsion-free, one has $(\widetilde{C}_{\Z,s_\infty})_{\tors}=(C_{\Z,s_\infty})_{\tors}$. If $i=2$,   \cite[Thm. 3.7]{CTV} establishes that $(C_{\Z,s_\infty})_{\tors}$ is isomorphic to 
 
$$\SH^3_{nr}(X^\an,\Q /\Z (2))_{\hbox{\rm \tiny ndiv}}\stackrel{def}{=}\hbox{\rm coker}[\SH^3_{nr}(X_\infty^\an,\Z(2))\otimes \Q/\Z\rightarrow \SH^3_{nr}(X_\infty^\an,\Q/\Z(2))].$$

\noindent  Hence Corollary \ref{Cor:ObstructionIntegralHC}  implies (see also \cite[Sec. 5.1]{CTV}):
 
\begin{corollaire}\label{Cor:H3nrHodge} Assume $S$ is a curve and  $\hbox{\rm VSing}(f_\infty,i)$ holds.   Then, for every integer $d\geq 1$, $$\hbox{\rm sup}\lbrace |\SH^3_{nr}(X_{\infty s},\Q /\Z  (2))_{\hbox{\rm \tiny ndiv}}|\; |\;  s\in |S|^{\leq d}  \rbrace | <+\infty.$$  
 \end{corollaire}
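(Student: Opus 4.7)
The proof is essentially a direct repackaging of Corollary \ref{Cor:ObstructionIntegralHC} through the unramified-cohomology description of the torsion obstruction, so the plan is to assemble the three ingredients already collected in the text just above the statement.

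The first step is to reduce to the obstruction group $\widetilde{\Ob}_{\Z,s}$ itself. Fixing $s \in |S|^{\leq d}$ and a complex point $s_\infty$ above $s$, the short exact sequence
$$0 \to \widetilde{C}_{\Z,s_\infty} \to C_{\Z,s_\infty} \to V_{\Z,s_\infty}/\widetilde{V}_{\Z,s_\infty} \to 0$$
recalled just before the statement, combined with the torsion-freeness of $V_{\Z,s_\infty}/\widetilde{V}_{\Z,s_\infty}$, yields
$(\widetilde{C}_{\Z,s_\infty})_{\tors} = (C_{\Z,s_\infty})_{\tors}.$
In particular $|(C_{\Z,s_\infty})_{\tors}| \leq \widetilde{\Ob}_{\Z,s}$.

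The second step is to invoke \cite[Thm.~3.7]{CTV} for $i=2$, which gives a canonical isomorphism
$$(C_{\Z,s_\infty})_{\tors} \simeq \SH^3_{nr}(X_{\infty,s}^{\an}, \Q/\Z(2))_{\hbox{\rm \tiny ndiv}}.$$
Together with the first step this upgrades to
$$\bigl|\SH^3_{nr}(X_{\infty,s}^{\an}, \Q/\Z(2))_{\hbox{\rm \tiny ndiv}}\bigr| \;\leq\; \widetilde{\Ob}_{\Z,s_\infty}.$$

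Finally, Corollary \ref{Cor:ObstructionIntegralHC}, whose hypotheses ($S$ a curve, $\hbox{\rm VSing}(f_\infty,i)$, and here specialized to $i=2$) are exactly those assumed in the statement, asserts that $\widetilde{\Ob}_{\Z}^{\leq d} = \sup_{s \in |S|^{\leq d}} \widetilde{\Ob}_{\Z,s_\infty}$ is finite. Taking the supremum over $s \in |S|^{\leq d}$ in the inequality above then gives the desired uniform bound. There is no real obstacle here: the content is entirely located in Corollary \ref{Cor:ObstructionIntegralHC} (and thus ultimately in Theorem \ref{MainTh1} via Artin comparison), while the present statement is just its translation into the language of unramified cohomology via \cite{CTV}; one only has to check that the choice of complex point $s_\infty$ above $s$ is harmless, which is standard since the analytic invariants depend only on the algebraic closure of $k(s)$ inside $\C$.
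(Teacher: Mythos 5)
Your proof is correct and follows exactly the paper's route: the torsion identification $(\widetilde{C}_{\Z,s_\infty})_{\tors}=(C_{\Z,s_\infty})_{\tors}$ from the short exact sequence, the isomorphism with $\SH^3_{nr}(X_{\infty,s}^{\an},\Q/\Z(2))_{\hbox{\rm \tiny ndiv}}$ via \cite[Thm.~3.7]{CTV}, and the uniform bound from Corollary \ref{Cor:ObstructionIntegralHC} (with $i=2$). No discrepancy with the paper's argument.
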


 \begin{remarque}
 Using \cite[Thm. 3.11]{CTV} and  Corollary \ref{Cor:ObstructionIntegralHC} for cycles of dimension $1$, one has an analogue of Corollary \ref{Cor:H3nrHodge} with  uniform bounds for the groups $\SH^{n-3}(X_{\infty,s},\mathcal H^{n}_{X_{\infty,s}}(\Q /\Z (n-1)))_{\hbox{\rm \tiny ndiv}}$, where $n=\mathrm{dim}(X_{\infty,s})$.
 \end{remarque}

  \subsection{Crystalline cohomology}\label{Sec:Crys}We now turn to the setting and retain the notation and conventions of Subsection \ref{Sec:MainThm2}.

   \subsubsection{"Comparison" with crystalline cohomology}   A delicate issue   when $p>0$ is to find a suitable analogue of  Artin's comparison isomorphism. Following the strategy of \cite{Emiliano}, this will be achieved by combining Fact  \ref{Emiliano} below, which relies - \textit{via} a $L$-function argument - on the Katz-Messing theorem \cite{KM} and  comparison of various categories of isocrystals, with\footnote{Note that  \cite{Emiliano} was focussed on divisors,  for which the fact that homological and numerical equivalence coincide is known.}   the conjectural statement CrysEt$_{\Q_\ell}(f,i)$. \\
  
  \noindent Let $\mathscr{S}$ be a smooth,   geometrically connected variety over $F$ and  consider a Cartesian square
 $$\xymatrix{\mathcal{X}_{\mathscr{S}}\ar[d]_{f_{\mathscr{S}}}\ar@{}[dr]|\square\ar[r]&\mathcal{X}\ar[d]^f\\
\mathscr{S}\ar[r]&\mathcal{S}.}$$

 \begin{fait}\label{Emiliano}\textit{}\hbox{\rm \cite[Proof of Thm. 1.6.3.1 - esp. (2.1.2.1), Rem. 1.6.3.2]{Emiliano}} Assume the canonical restriction morphism in \'etale $\Q_\ell$-cohomology
 $$\SH^0(\mathcal{S}_{\bar F},R^{2i}f_*\Q_\ell(i))\tilde{\rightarrow}\SH^0(\mathscr{S}_{\bar F},R^{2i}f_*\Q_\ell(i)) $$
 is an isomorphism. Then the canonical restriction morphism in crystalline cohomology
 $$\SH^0(\mathcal{S},R^{2i}f_{\crys,*}\mathcal{O}_{\mathcal{X}/K}) \tilde{\rightarrow}\SH^0(\mathscr{S},R^{2i}f_{\mathscr{S},\crys,*}\mathcal{O}_{\mathcal{X}_{\mathscr{S}}/K})  $$
 is an isomorphism.
  \end{fait}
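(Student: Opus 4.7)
The plan is to reduce the crystalline statement to the étale one via an $L$-function comparison, following the strategy in Ambrosi \cite{Emiliano}. Both restriction maps are injective for formal reasons: an overconvergent $F$-isocrystal (resp. a lisse $\Q_\ell$-sheaf) on the smooth geometrically connected variety $\mathcal{S}$ is determined by its restriction to any dense open, and $\mathscr{S} \to \mathcal{S}$ is dominant with geometrically connected generic fibre. So it is enough to prove that the $K$-dimension of $\SH^0(\mathcal{S}, R^{2i}f_{\crys,*}\mathcal{O}_{\mathcal{X}/K})$ equals that of $\SH^0(\mathscr{S}, R^{2i}f_{\mathscr{S},\crys,*}\mathcal{O}_{\mathcal{X}_{\mathscr{S}}/K})$, and then to match these with the étale dimensions, which are equal by assumption.

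For this, I would express the dimension of each of the four global-sections spaces as the order of a pole of an $L$-function at $s = i$. For any smooth geometrically connected $F$-variety $\mathcal{T}$ equipped with a smooth proper morphism $g : \mathcal{Y} \to \mathcal{T}$, the Euler product defining the $L$-function of the lisse sheaf $R^{2i}g_*\Q_\ell$ and the one defining the $L$-function of the overconvergent $F$-isocrystal $R^{2i}g_{\crys,*}\mathcal{O}_{\mathcal{Y}/K}$ are computed at each closed point $t \in |\mathcal{T}|$ from the characteristic polynomials of Frobenius on the fibre cohomologies of the smooth proper variety $\mathcal{Y}_t$. The theorem of Katz-Messing \cite{KM} says exactly that these two characteristic polynomials coincide, so the étale and crystalline $L$-functions of $\mathcal{T}$ are equal.

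On the other hand, by the Grothendieck-Lefschetz trace formula for lisse $\Q_\ell$-sheaves on the one hand, and by the Étesse-Le Stum trace formula for overconvergent $F$-isocrystals on the other, each $L$-function has a pole at $s = i$ whose order equals the dimension of the subspace of the corresponding $\SH^0(\mathcal{T}_{\bar F}, -)$ (resp. $\SH^0(\mathcal{T}, -)$) on which the geometric Frobenius acts trivially after the Tate twist by $(i)$ — i.e. the dimension of the space of global sections itself. Applying this to $\mathcal{T} = \mathcal{S}$ and $\mathcal{T} = \mathscr{S}$ converts the hypothesis into equality of the two crystalline $K$-dimensions, and combined with injectivity we conclude.

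The main subtle point lies in the last step: one must work in a category of overconvergent (or at least convergent) $F$-isocrystals where an $L$-function trace formula is available, check that the twist by $(i)$ normalises the Frobenius eigenvalue to be $1$ on the "horizontal global sections" subspace, and ensure that the Katz-Messing identification of characteristic polynomials at closed points is compatible with these normalisations on both sides simultaneously. This is precisely the bookkeeping carried out in \cite[proof of Thm. 1.6.3.1]{Emiliano}, which extends from the case of divisors treated there to our $R^{2i}$ once one notes that the argument only used Katz-Messing fibrewise, not the coincidence of homological and numerical equivalence (which was used elsewhere in loc.cit.).
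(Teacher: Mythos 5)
The paper does not actually prove this statement: it is imported verbatim as a Fact from Ambrosi \cite{Emiliano}, and the surrounding text only records that the cited proof is an $L$-function argument resting on Katz--Messing and a comparison of categories of isocrystals. Your outline is indeed the strategy of that cited proof (and your closing remark, that only the fibrewise Katz--Messing input is needed and not the coincidence of homological and numerical equivalence, is consistent with the paper's decision to isolate $\hbox{\rm CrysEt}_{\Q_\ell}(f,i)$ as a separate hypothesis). Two points in your write-up are nevertheless off. First, $\mathscr{S}\rightarrow\mathcal{S}$ is \emph{not} dominant in the intended application: there $\mathscr{S}$ is a model over $F$ of the residue field of a closed point $s$ of the generic fibre $S$, hence a positive-codimension subvariety of $\mathcal{S}$ (see Diagram (\ref{Diagram:Crys})). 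Injectivity of both restriction maps still holds, but for the standard reason that $\mathscr{S}$ is connected and a horizontal (resp.\ monodromy-invariant) section over a connected base is determined by its fibre at a single point; you should not invoke dominance.

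Second, the assertion that ``each $L$-function has a pole at $s=i$ whose order equals \dots\ the dimension of the space of global sections itself'' conflates two different quantities. The pole order at a single point computes, via the trace formula, Poincar\'e duality and the weight bounds of Weil~II (\'etale and rigid), the multiplicity of \emph{one} generalized Frobenius eigenvalue on the relevant $\SH^0$, not its full dimension; there is no reason for Frobenius to act unipotently on $\SH^0(\mathcal{S}_{\bar F},R^{2i}f_*\Q_\ell(i))$. The repair is to exploit that both restriction maps are Frobenius-equivariant, so it suffices to match generalized eigenspace dimensions eigenvalue by eigenvalue, i.e.\ to compare pole orders at \emph{all} points of the appropriate absolute value, using the hypothesis to equate the \'etale multiplicities for $\mathcal{S}$ and $\mathscr{S}$ and Katz--Messing to transfer the equality of $L$-functions to the crystalline side. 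This is exactly the bookkeeping you defer to \cite{Emiliano}, but as written your reduction to ``equality of dimensions'' does not follow from the single pole order you compute.
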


    \subsubsection{Proof of  Lemma \ref{MainLem2} (ii)} \label{Sec:LemmaBiiproof}  Let   $s\in |S|_{\mathcal{V}_{\ell,\Q_\ell}}^{\gen}$. 
   Recall we are to prove $  V^a_{ \Q_\ell,s}=\Lambda_{\Q_\ell}$.  Replacing $k$, $F$ by finite field extensions, one may assume there exists a   smooth, separated and geometrically connected  scheme $\mathscr{S}$ over $F$ with generic point $\eta_{\mathscr{S}}:\Spec(k(s))\rightarrow \mathscr{S}$ and such that $  \mathscr{S}(F)\not=\emptyset $, and a   Cartesian diagram\\

\begin{equation}\label{Diagram:Crys}
\xymatrix{\mathcal{X}_t\ar[d]_{f_t}\ar[r]\ar@{}[dr]|\square&\mathcal{X}_{\mathscr{S}}\ar[d]_{f_{\mathscr{S}}}\ar@{}[dr]|\square\ar[r]&\mathcal{X}\ar[d]^f&X\ar[l]\ar[d]^f\ar@{}[dl]|\square&X_s\ar[d]^{f_s}\ar@{}[dl]|\square\ar[l] \ar@/_2pc/[lll]\\
F\ar[r]^t\ar[dr]&\mathscr{S}\ar[r]\ar[d]\ar[dr]&\mathcal{S}\ar[d]&S\ar[l]\ar[d]&k(s)\ar[l]_s\ar[dl]\ar@/^2pc/[lll]_{\eta_{\mathscr{S}}}\\
&F&\mathscr{K}\ar[l]&k\ar[l]_{\eta_{\mathscr{K}}}&}\end{equation}
\noindent Replacing further $k$, $F$ by finite field extensions,  one may   assume that 
 
\begin{equation}\label{Eq:Ar1} V^a_{ \Q_\ell, s}=\Sim[\CH^i(X_s) \rightarrow\CH^i(X_{\bar s})\stackrel{c_{\ell,s}}{\rightarrow} V_{\Q_\ell}].\end{equation} 
 From (\ref{Eq:Ar1}), it is enough to show that for every $\widetilde{\alpha}_s\in   \CH^i(X_s)_{\Q}$ with image $\alpha_{\ell,s}:=c_{\ell,s}(\widetilde{\alpha}_s)\in  V_{\Q_\ell }$, there exists $\widetilde{\alpha}\in \CH^i(X )_\Q$ such that $c_{ \ell,s}(\widetilde{\alpha}|_{X_s})=\alpha_{\ell,s}$.  We retain the notation and conventions in Diagram (\ref{Diagram:Crys}).
 Up to shrinking $\mathscr{S}$, one may assume there exists  $\widetilde{\alpha}_{\mathscr{S}}\in \CH^i(\mathcal{X}_{\mathscr{S}})_{\Q}$ such that $\widetilde{\alpha}_{\mathscr{S}}|_{X_s}=\widetilde{\alpha}_s$; write $\widetilde{\alpha}_t:=\widetilde{\alpha}_{\mathscr{S}}|_{\mathcal{X}_t}\in \CH^i(\mathcal{X}_t)_{\Q}$. Consider now the canonical commutative  diagram 
 $$\xymatrix{\CH^i(\mathcal{X} )_{\Q}\ar[d]_{c_{\crys}}\ar[dr]^{|_{\mathcal{X}_t}}\ar[rr]^{|_{\mathcal{X}_{\mathscr{S}}}}&&\CH^i(\mathcal{X}_{\mathscr{S}})_{\Q}\ar[d]^{c_{\crys,\mathscr{S}}}\ar[dl]_{|_{\mathcal{X}_t}}\\
\SH^{2i}_{\crys}(\mathcal{X})  \ar[d]_{\epsilon}\ar[dr]^{|_{\mathcal{X}_t}} &\CH^i(\mathcal{X}_ t)_{\Q}\ar[d]_{c_{\crys,t}}&\SH^{2i}_{\crys}(\mathcal{X}_{\mathscr{S}}) \ar[d]^{\epsilon}\ar[dl]_{|_{\mathcal{X}_t}}\\
\SH^0(\mathcal{S},R^{2i}f_{\crys,*}\mathcal{O}_{\mathcal{X}/K}) \ar@/_2pc/[rr]^\simeq \ar[r]&\SH^{2i}_{\crys}(\mathcal{X}_t) &\SH^0(\mathscr{S},R^{2i}f_{\mathscr{S},\crys,*}\mathcal{O}_{\mathcal{X}_{\mathscr{S}}/K}). \ar[l] }
$$
  As  $s\in S_{\mathcal{V}_{\ell,\Q_\ell}}^{\gen}$, the canonical restriction morphism $$\SH^0(\mathcal{S}_{\bar F},R^{2i}f_*\Q_\ell(i))\tilde{\rightarrow}\SH^0(\mathscr{S}_{\bar F},R^{2i}f_*\Q_\ell(i)) $$
 is an isomorphism - see \cite[\S 2.2.2]{Emiliano}. Here, we implicity use the  reduction \ref{Sec:ConnectedReduction} a), b). Hence, by Fact \ref{Emiliano},  the bottom horizontal arrow is an isomorphism. This implies  that  $\alpha_t:=c_{\crys,t}(\widetilde{\alpha}_t)$ lies in $\SH^0(\mathcal{S},R^{2i}f_{\crys,*}\mathcal{O}_{\mathcal{X}/K}) $. But then, by implication 2) $\Longrightarrow$ 1) in $\hbox{\rm VCrys}(f,i)$, there exists $\widetilde{\alpha}_{\cX}\in \CH^i(\mathcal{X} )_{\Q}$ such that $c_{\crys,t}(\widetilde{\alpha}_{\cX}|_{\mathcal{X}_t})=c_{\crys}(\widetilde{\alpha}_{\cX})|_{\mathcal{X}_t}= \alpha_t=c_{\crys,t}(\widetilde{\alpha}_t)$. By $\hbox{\rm CrysEt}_{\Q_\ell}(f,i)$, this implies $c_{\ell,t}(\widetilde{\alpha}_{\cX}|_{\mathcal{X}_t}) =c_{\ell,t}(\widetilde{\alpha}_t)$. The assertion thus follows, with $\widetilde{\alpha} =\widetilde{\alpha}_{\cX}|_X$, from the  canonical commutative specialization diagram of cycle class maps 
$$\xymatrix{ &\CH^i(\mathcal{X})_{\Q}\ar[d]^{|_{X_s}}\ar[dl]_{|_{\mathcal{X}_t}}\ar[dr]^{|_{X}}& \\
\CH^i(\mathcal{X}_t)_{\Q}\ar[d]_{c_{\ell,t}}&\CH^i(X_s)_{\Q}\ar[d]^{c_{\ell,s}}\ar[l]_{sp_{s,t}} & \CH^i(X )_{\Q} \ar[l]_{|_{X_s}}  \\
\SH^{2i}(\mathcal{X}_{\bar t},\Q_\ell(i))\ar@{=}[r]&\SH^{2i}(X_{\bar s},\Q_\ell(i)).& }$$

\end{document}